\newtheorem{theorem}[subsection]{Theorem}
\newtheorem{corollary}[subsection]{Corollary}
\newtheorem{proposition}[subsection]{Proposition}
\newtheorem{result}[subsection]{Result}
\newtheorem{problem}[subsection]{Problem}
\newtheorem{observation}[subsection]{Observation}
\newcommand{\dom}{\mathrm{dom}}
\newcommand{\DOM}{\mathrm{DOM}}
\newcommand{\cC}{\mathcal{C}}
\newcommand{\cF}{\mathcal{F}}
\title{Characterization of graphs with orientable total domination number equal to $|V|-1$}
\author{Zoltán L. Blázsik\thanks{Bolyai Institute, University of Szeged, Aradi v\'ertan\'uk tere 1, 6720
Szeged, Hungary. \\ University of Johannesburg Auckland Park, 2006 South Africa, E-mail: \url{blazsik@server.math.u-szeged.hu},}
\and
Leila Vivien Nagy\thanks{Eötvös Loránd University, Pázmány Péter sétány 1/A, Budapest, Hungary.
}
}
\begin{document}
\maketitle
\begin{abstract}
In a directed graph $D$, a vertex subset $S\subseteq V$ is a total dominating set if every vertex of $D$ has an in-neighbor from $S$. A total dominating set exists if and only if every vertex has at least one in-neighbor. We call the orientation of such directed graphs valid. The total domination number of $D$, denoted by $\gamma_t(D)$, is the size of the smallest total dominating set of $D$. For an undirected graph $G$, we investigate the upper (or lower) orientable total domination number of $G$, denoted by $\DOM_t(G)$ (or $\dom_t(G)$), that is the maximum (or minimum) of the total domination numbers over all valid orientations of $G$. We characterize those graphs for which $\DOM_t(G)=|V(G)|-1$, and consequently we show that there exists a family of graphs for which $\DOM_t(G)$ and $\dom_t(G)$ can be as far as possible, namely $\DOM_t(G)=|V(G)|-1$ and $\dom_t(G)=3$. 
\end{abstract}

\section{Introduction} 
Given an undirected graph $G=(V,E)$ with vertex set $V$ and edge set $E$. The graphs in this paper are finite (i.e. $|V|$ is finite) and without loops or multiple edges. The \emph{neighborhood} $N(v)$ of a vertex $v$ is the set of vertices adjacent to $v$, i.e. $N(v)=\{u\ |\ uv\in E\}$. The vertices in $N(v)$ are the \emph{neighbors} of $v$. The \emph{degree} $d(v)$ of $v$ is defined as the number of neighbors of $v$. The vertex $v$ is \emph{isolated} if $d(v)=0$, and \emph{full} if every other vertex is a neighbor of $v$, i.e. $d(v)=|V|-1$. A graph is \emph{isolate-free} if there are no isolated vertices in it. The \emph{minimum degree} and the \emph{maximum degree} of $G$ is denoted by $\delta(G)$ and $\Delta(G)$, respectively. A vertex $u\in{V}$ \emph{dominates} $v\in{V}$ if $uv\in{E}$, or in other words if $v\in N(u)$. Then $v$ is \emph{dominated by} $u$. A vertex subset $S\subseteq V$ \emph{dominates} $T\subseteq V$ if and only if $\cup_{v\in S}~ N(v)=T$. A vertex subset $S\subseteq V$ is a \emph{dominating set} of $G$ if each vertex in $V\setminus S$ is dominated by a vertex in $S$, or equivalently if $\cup_{v\in S}~N(v)=V$. A dominating set that does not contain any other dominating set as a proper subset is a \emph{minimal dominating set}.  
The \emph{domination number} of $G$, denoted by $\gamma(G)$, is the minimum cardinality of a dominating set of $G$. 

A set $S\subseteq V$ is a \emph{total dominating set} of $G$ if every vertex in $V$ is dominated by a vertex from $S$, i.e. $\displaystyle\cup_{v\in S}~N(v) = V$. Note that a graph $G$ has a total dominating set if and only if it is isolate-free. A total dominating set is a \emph{minimal total dominating set} if it does not contain any total dominating set as a proper subset. The \emph{total domination number} of $G$, denoted by $\gamma_t(G)$, is the minimum cardinality of a total dominating set of $G$. Since total domination is in the main focus of this paper, let us suppose that every graph is isolate-free in the sequel.

The concept of domination can be applied for directed graphs as well in the following way. Given a directed graph $D=(V,A)$ with vertex set $V$, and arc set $A$. The \emph{in-neighborhood} $N^D_{-}(v)$ of a vertex $v\in V$ in $D$ is the collection of those vertices $u\in V$ such that $uv\in A$. The \emph{out-neighborhood} $N^D_{+}(v)$ of a vertex $v\in V$ in $D$ is the collection of those vertices $u\in V$ such that $vu\in A$. Similarly, the \emph{in-degree} and \emph{out-degree} of a vertex $v$, denoted by $d^D_{-}(v)$ and $d^D_{+}(v)$, is defined in the following way: $d^D_{-}(v) = |N^D_{-}(v)|$ and $d^D_{+}(v) = |N^D_{+}(v)|$. The vertex $u\in{V}$ \emph{dominates} $v\in{V}$ if $u \in {N^D_{-}(v)}$. A set $S\subseteq V$ is a \emph{dominating set} of $D$ if each vertex in $V\setminus S$ is dominated by a vertex from $S$. A set $S\subseteq V$ is a \emph{total dominating set} of $D$ if every vertex in $V$ is dominated by a vertex in $S$. The \emph{domination number} (or \emph{total domination number}) is the minimum cardinality of a dominating set (or total dominating set) of $D$, and it is denoted by $\gamma(D)$ (or $\gamma_t(D)$). Note that a directed graph has a total dominating set if and only if there are no vertices whose in-degree is equal to 0. Therefore total domination can only be interpreted in such directed graphs where the in-degree of any vertex is at least 1. For the interested reader we refer to excellent books on this topic \cite{HHH1,HHH2,HHH3,HY}.

Let $G=(V,E)$ be an undirected graph. An \emph{orientation} of $G$ is a directed graph $D(V,A)$ which has the same vertex set as $G$ and every edge from $G$ is oriented in one of the two possible ways. We call an orientation \emph{valid} if the in-degree of every vertex is at least 1, thus the notion of a total dominating set can be interpreted and always exists. What kind of graphs admit a valid orientation? It is straightforward to see that every connected component of $G$ must have at least as many edges as vertices because otherwise the non-zero in-degree condition would fail. Thus every connected component must contain a cycle. However in each  connected component, if we fix a spanning tree $T$ and add another edge $e$ from this component then it will introduce a cycle $C$ in $T\cup \{e\}$. If we orient the edges of $C$ to be a circuit and the rest of the edges of $T\cup \{e\}$ is oriented outwards from the cycle then every vertex of this component will have at least one in-neighbor. Hence those graphs that admit a valid orientation have a complete characterization, namely in each of their connected components they must contain at least one cycle. Let us denote the class of these graphs with $\cC$. In the sequel, we will always consider undirected graphs which belong to $\cC$.

There are recent papers \cite{ABKKR,CH} that are focusing on the \emph{orientable domination number} of an undirected graph $G$ from class $\cC$. The orientable domination number is denoted by $\DOM(G)=\max\{\gamma(D):~D~\mathrm{is~a~valid~orientation~of~}G\}$. The analogous concept was introduced earlier for total domination in \cite{HLY1,HLY2}. The \emph{(upper) orientable total domination number} is defined to be $\DOM_t(G)=\max\{\gamma_t(D):~D~\mathrm{is~a~valid~orientation~of~}G\}$, and the \emph{(lower) orientable total domination number} is defined to be $\dom_t(G)=\min\{\gamma_t(D):~D~\mathrm{is~a~valid~orientation~of~}G\}$. It was also shown in those early papers that $\DOM_t(G)=|V(G)|=\dom_t(G)$ if and only if $G$ is a cycle. Recently, the upper and lower orientable total domination numbers were investigated by Anderson et al. \cite{ADJK}, and they characterized those graphs of girth at least 7 for which $\DOM_t(G)=\dom_t(G)$, and also characterized those graphs for which $\dom_t(G)=|V(G)|-1$. They introduced a family of graphs, denoted by $\cF$, that is the collection of those graphs that can be obtained from a cycle $C_k$, $k\ge 3$, and a path $P_{\ell}$, $\ell\ge 2$, by identifying one of the leaves of the path with an arbitrary vertex of the cycle.

\begin{result}\label{r:char}
    Let $G\in \cC$ be a connected graph. Then $\dom_t(G)=|V(G)|-1$ if and only if $G\in \cF \cup \{K_4,K_{2,3},K_4-e\}$.
\end{result}

They wondered whether there exists a graph $G$ such that $\dom_t(G)<\DOM_t(G)=|V(G)|-1$ holds. As they have already pointed out, if $\dom_t(G)=|V(G)|-1$ then $\DOM_t(G)=|V(G)|-1$ follows as well. Hence our task is to find all such graphs for which $\dom_t(G)<\DOM_t(G)=|V(G)|-1$ holds. They pose the following question as {\bf Problem 1.} in \cite{ADJK}, and also suggested that the solution to the question might results in some graphs such that $\dom_t(G)<|V(G)|-1=\DOM_t(G)$. 

\begin{problem}\label{p:p1}
    Characterize all graphs where $\DOM_t(G)=|V(G)|-1$.
\end{problem}

Let us introduce further families of connected graphs in order to formulate our main result. A connected graph $G\in \cC$ belongs to the family $\cF_2$ if $\delta(G)\ge 2$ and $G$ contains $\ell\ge 1$ vertex disjoint cycles and one additional vertex $s$. The further edges of $G$ must be incident with $s$ such that $s$ is adjacent to at least one vertex from each cycle component while $d(s)\ge 2$.
Let $\cF_1$ denote a family that generalises the family $\cF$ from \cite{ADJK} in the following sense. If a connected graph $G\in\cC$ belongs to $\cF_1$, then $G$ must satisfy $\delta(G)=1$, and $G$ has to have only one vertex $s$ of degree $1$. Furthermore, $G$ must have a path $sw_1 w_2 \dots w_k$ of length $k\ge 1$ from $s$ to $w_k$ and $G$ might have further $\ell\ge 0$ vertex-disjoint cycle components. All of the additional edges must be incident with $w_k$ such that $w_k$ is adjacent to at least one vertex from each cycle component and $d(w_k)\ge 2$. Observe that if $\ell=1$ and $d(w_k)=2$ then these graphs are exactly the elements of $\cF$. Moreover, if we extend a graph $G\in\cF_2$ with a path by identifying one of the endpoints of the path with $s$ and may add some further edges between the inner vertices of the path and $s$ then we can construct any graph from $\cF_1$ that has at least $1$ cycle component, i.e. $\ell\ge 1$. Hence $\cF\subseteq \cF_1$ and $K_4$, $K_{2,3}$, and $K_4-e$ belongs to $\cF_2$.

Let $\cF_3$ denote another family of connected graphs that consists of graphs $G\in\cC$ that can be constructed from a graph $R\in\cF_1$ in the following way. Either $G$ has one more additional edge compared to $R$ or two more edges. Consider the vertex $s$ in $R$ from the definition of $\cF_1$. If $G$ and $R$ differs by just only one edge $e$, then $e$ must be incident with $s$, but the other endpoint of $e$ could be any other vertex. If $G$ and $R$ differs by exactly two edges $e_1,~e_2$, then both of them have to be incident with $s$ but some further conditions must be satisfied. By the definition of $\cF_1$, the number of vertices from $V(R)\setminus\{w_k\}$ that has degree equal to $3$ is exactly $d(w_k)-1$. If $d(w_k)\ge 4$, then $k>1$, and $e_1=sw_k$ is mandatory but $e_2$ can be an arbitrary edge incident with $s$. If $d(w_k)=3$, then let us denote the two other vertices of degree 3 with $x$ and $y$. In this case either $s$ is adjacent to $w_k$ while $k>1$ and $e_2$ is arbitrarily chosen, or $s$ is adjacent to a neighbor of $x$ and a neighbor of $y$. If $x$ or $y$ belongs to $W$, for example $x=w_i$ then $s$ must be adjacent to $w_{i-1}$, if $y$ belongs to some cycle component then it does not matter which neighbor of $y$ is adjacent to $s$. Observe that this second option is available if $x$ and $y$ is not $w_1$ or $w_2$ because $s$ cannot have additional edges to $w_1$ or $s$ since $G$ is a simple graph. If $d(w_k)=2$, then let us denote the only vertex of degree 3 with $x$. Similarly to the previous case, either $s$ is adjacent to $w_k$ while $k>1$, or $s$ is adjacent to another neighbor of $x$ and $e_2$ is arbitrarily chosen in both cases. If $x=w_i$ then $e_1=sw_{i-1}$ is necessary but if $x$ belongs to the cycle component then it does not matter which neighbor of $x$ from the cycle is adjacent to $s$.

In this paper, our main result is the characterization of graphs satisfying $\DOM_t(G)=|V(G)|-1$. 

\begin{theorem}\label{t:main}
    Let $G\in \cC$ be a connected graph. Then $\DOM_t(G)=|V(G)|-1$ if and only if $G\in\cF_1\cup \cF_2\cup\cF_3$. If $G\in\cC$ is not connected but $\DOM_t(G)=|V(G)|-1$ then $G$ is a disjoint union of some cycles and a graph $G_0\in\cF_1\cup\cF_2\cup\cF_3$.
\end{theorem}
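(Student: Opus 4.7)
The proof splits into three stages: a short disconnected-to-connected reduction, a sufficiency direction via explicit orientations, and a structural necessity direction. The disconnected case is handled at the outset, since $\DOM_t$ is additive over connected components (any valid orientation of $G$ restricts to valid orientations of each $G_i$, and $\gamma_t$ adds) and the introductory fact $\DOM_t(H)=|V(H)|$ iff $H$ is a cycle immediately forces any disconnected $G$ with $\DOM_t(G)=|V|-1$ to be a disjoint union of cycles together with a single non-cycle component $G_0$ satisfying $\DOM_t(G_0)=|V(G_0)|-1$.

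For sufficiency, I would exhibit an explicit valid orientation achieving $\gamma_t(D)=|V|-1$ for each family. For $G\in\cF_2$: orient each cycle as a consistent directed cycle and every edge incident to $s$ into $s$, making $s$ a sink of out-degree $0$; then every other vertex is uniquely in-dominated by its cycle-predecessor, so $V\setminus\{s\}$ is a total dominating set, and removing any further cycle vertex orphans its cycle-successor. For $G\in\cF_1$: apply the same idea, orienting the path as $w_k\to\cdots\to w_1\to s$ so that $s$ remains a sink of in-degree exactly $1$, each cycle as a directed cycle, and all cycle-to-$w_k$ edges into $w_k$. For $G\in\cF_3$: begin with the $\cF_1$-orientation of the underlying graph $R$ and orient each new edge out of $s$; the case distinctions built into the definition of $\cF_3$ are exactly what guarantees that the new arc $s\to y$ bumps exactly one vertex out of the "unique-in-neighbor" set while $\text{In}(y)$ simultaneously realizes the unique resulting pair, keeping $\gamma_t(D)=|V|-1$.

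For necessity, fix a valid orientation $D$ of connected $G$ with $\gamma_t(D)=|V|-1$ and let $A=\{v\in V \mid v\text{ is the unique in-neighbor of some }u\}$. A short dualization of the total-domination condition shows that $\gamma_t(D)=|V|-1$ is equivalent to $A\ne V$ together with the covering condition: every pair $\{v_1,v_2\}\subseteq V\setminus A$ equals $\text{In}(u)$ for some $u$. Consider the sub-digraph $D_0\subseteq D$ formed by the arcs into all $D$-vertices of in-degree $1$; then $D_0$ has in-degree $\le 1$ everywhere, so each component is an out-tree (rooted at an in-degree-$\ge 2$ vertex of $D$) or a directed cycle with out-trees attached, and $V\setminus A$ is precisely the set of $D_0$-out-degree-$0$ vertices (tree leaves plus isolated vertices). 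The crux is to prove $|V\setminus A|\in\{1,2\}$ — larger values would require $\binom{|V\setminus A|}{2}\ge 3$ distinct pair-realizing vertices, which a careful counting using simplicity of $G$ and the valid-orientation constraint rules out — and then to match the resulting $D_0$-structure plus the extra arcs into in-degree-$\ge 2$ vertices to one of $\cF_1,\cF_2,\cF_3$. The cycle-components of $D_0$ reconstruct the cycle-components of $G$ attached to a degree-large vertex ($s$ in $\cF_2$, $w_k$ in $\cF_1$); the unique non-trivial out-tree of $D_0$ (when present) reconstructs the pendant path $sw_1\cdots w_k$; and when $|V\setminus A|=2$, the single pair-realizing vertex $y$ and its two in-edges pin down the exact $\cF_3$-adjacencies at $s$.

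The main obstacle is the rigidity of the $|V\setminus A|=2$ analysis: one must trace precisely which combinations of $y$ and its in-neighbors are realizable without creating a second simultaneously-excludable vertex elsewhere in $D$, and verify that these configurations trace out exactly the case distinctions in the definition of $\cF_3$ (which depend on $d(w_k)$ and on the location of the degree-$3$ vertices relative to the path and the cycle components). Whenever $\dom_t(G)=|V|-1$ already holds, Result~\ref{r:char} identifies $G$ as an element of $\cF\cup\{K_4,K_{2,3},K_4-e\}\subseteq\cF_1\cup\cF_2$ and short-circuits a large part of the analysis, so the genuinely new structural work is concentrated on graphs with $\dom_t(G)<\DOM_t(G)=|V|-1$, where the $\cF_3$-rigidity argument is the main technical contribution.
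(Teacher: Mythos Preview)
Your dual characterization via the set $A$ of ``forced'' vertices is correct and elegant: $\gamma_t(D)=|V|-1$ is indeed equivalent to $A\ne V$ together with the condition that every pair in $V\setminus A$ is realized as some $\text{In}(u)$. However, the central structural claim---that $|V\setminus A|\in\{1,2\}$---is false, and this is a genuine gap rather than a detail to be filled in.

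Consider $K_{3,3}$ with parts $\{a,b,c\}$ and $\{p,q,r\}$, oriented so that $\text{In}(p)=\{a,b\}$, $\text{In}(q)=\{a,c\}$, $\text{In}(r)=\{b,c\}$, and $p\to c$, $q\to b$, $r\to a$. This is a valid orientation, every pair from $\{a,b,c\}$ is an in-neighbourhood, and one checks directly that $\gamma_t(D)=5=|V|-1$; but $A=\{p,q,r\}$, so $|V\setminus A|=3$. The graph $K_{3,3}$ does lie in $\cF_3$ (take $R\in\cF_1$ to be a pendant edge attached to a $4$-cycle with one extra chord through the attachment vertex, then add two edges at the leaf), so the theorem is not contradicted---only your counting step. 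The out-degree bound $d^+\le 2$ gives the correct inequality $|V\setminus A|-1\le 2$, i.e.\ $|V\setminus A|\le 3$, because each $v\in V\setminus A$ must send an arc to each of the $|V\setminus A|-1$ pair-realizers containing it. The case $|V\setminus A|=3$ is not spurious and would require its own structural analysis in your framework.

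By contrast, the paper avoids this trichotomy entirely. Rather than classifying extremal orientations by $|V\setminus A|$, it splits on whether the extremal orientation has a vertex of out-degree~$0$. When it does, a direct path-and-cycle decomposition yields $\cF_1\cup\cF_2$. When it does not, one takes the unique excluded vertex $s$, deletes its (one or two) out-arcs, and observes that the restricted orientation is again extremal with $d^+(s)=0$; this reduces to the first case and recovers the extra edges as precisely the $\cF_3$ data. That reduction is what makes the $\cF_3$ case tractable and is the idea your outline is missing.
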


The structure of the paper is the following. In Section 2, we summarize those observations and previous results that will be used later on. Section 3 contains the proof of our main result. In Section 4, we will conclude by showing graphs $G$ such that $\DOM_t(G)$ and $\dom_t(G)$ is as far as possible. 

\section{Preliminaries} 

Our task is to characterize those graphs that satisfy the $\DOM_t(G)=|V(G)|-1$ condition. As it was pointed out in the Introduction, in order to investigate $\DOM_t(G)$ we should restrict the family of graphs in question to the class $\cC$, i.e. every connected component of $G$ contains a cycle. Anderson et al. in \cite{ADJK} characterized those graphs $G\in\cC$ such that $\DOM_t(G)=|V(G)|$.

\begin{result}\label{r:equality}
Let $G\in\cC$, then $\DOM_t(G)=|V(G)|$ if and only if $G$ is a disjoint union of cycles.   
\end{result}

If the graph $G\in\cC$ is not connected then the total domination number of each component is not effected by the chosen orientation of the other components because there are no edges between the components hence the orientable total domination number of $G$ can be computed from the orientable total domination number of its components.

\begin{proposition}\label{p:concomp}
Let $G\in \cC$, and suppose that $G$ has $k\ge1$ connected components $G_1,G_2,\dots,$ $G_k$. Then $\DOM_t(G)=\sum_{i=1}^{k} \DOM_t(G_i)$.
\end{proposition}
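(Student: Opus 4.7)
The plan is to exploit the fact that orientations, validity, and total domination all decompose across connected components, since there are no edges (and hence no arcs in any orientation) between distinct components.

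First I would observe that an orientation $D$ of $G$ is in one-to-one correspondence with a tuple $(D_1,\dots,D_k)$ where each $D_i$ is an orientation of $G_i$, simply by restriction. Validity is a purely local condition on in-degrees, so $D$ is valid for $G$ if and only if every $D_i$ is valid for $G_i$. Since each $G_i$ lies in $\cC$ (otherwise some component would contain no cycle and $G$ would not admit a valid orientation at all, contradicting $G\in\cC$), each $G_i$ does admit some valid orientation, so both sides of the claimed equation are well-defined.

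Next I would establish the per-orientation identity $\gamma_t(D)=\sum_{i=1}^k \gamma_t(D_i)$ for any valid orientation $D$ of $G$. The key point is that an arc $uv$ of $D$ has $u,v$ in the same component $G_i$ (because $G$ has no edges across components). Consequently, for a vertex $v\in V(G_i)$, we have $N^D_{-}(v)\subseteq V(G_i)$. Thus a set $S\subseteq V(G)$ is a total dominating set of $D$ if and only if $S\cap V(G_i)$ is a total dominating set of $D_i$ for every $i$. Minimizing over $S$ component-by-component yields $\gamma_t(D)=\sum_{i=1}^k \gamma_t(D_i)$.

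Finally I would take the maximum over valid orientations. Because the choice of each $D_i$ is independent of the others, and both validity and the quantity $\gamma_t(D)=\sum_i\gamma_t(D_i)$ separate across components, we get
\[
\DOM_t(G)=\max_{D\text{ valid}}\gamma_t(D)=\max_{D_1,\dots,D_k\text{ valid}}\sum_{i=1}^k \gamma_t(D_i)=\sum_{i=1}^k \max_{D_i\text{ valid}}\gamma_t(D_i)=\sum_{i=1}^k \DOM_t(G_i),
\]
which is the claim. There is no real obstacle here: the only thing worth stating carefully is the component-wise decomposition of total dominating sets, which rests entirely on the observation that no arc of any orientation can cross between two components of $G$.
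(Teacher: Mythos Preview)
Your proof is correct and follows the same approach as the paper. The paper does not even include a formal proof environment for this proposition; it simply remarks in the sentence preceding the statement that ``the total domination number of each component is not affected by the chosen orientation of the other components because there are no edges between the components,'' and treats the proposition as immediate from that observation. Your argument is a careful expansion of exactly this idea.
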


By Proposition \ref{p:concomp}, if $\DOM_t(G)=|V(G)|-1$ then all but one connected component must be just a cycle, and the last component should be a connected graph $G_k$ such that $\DOM_t(G_k)=|V(G_k)|-1$. Therefore we should concentrate on the characterization of the connected case. Suppose in the sequel that the graph $G\in \cC$ in question is connected but not a cycle thus $\DOM_t(G)\le|V(G)|-1$ by Result \ref{r:equality}.

For a connected graph $G\in\cC$, let us call a valid orientation of $G$ \emph{extremal} if $\gamma_t(G)=|V(G)|-1$. In other words, a connected graph $G\in\cC$ satisfies $\DOM_t(G)=|V(G)|-1$ if and only if $G$ admits an extremal orientation. In the remaining part of this section, we collect some observations and properties of extremal orientations.

\begin{observation}\label{o:1}
Since an extremal orientation $o$ is also a valid orientation, every in-degree have to be at least 1, i.e. $d^o_{-}(v)\ge 1$ for every $v\in V$.
\end{observation}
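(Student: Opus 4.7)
The plan is simply to unwind definitions. By construction, an extremal orientation $o$ of a graph $G\in\cC$ is defined as a special kind of valid orientation, namely one whose total domination number attains the extremal value $|V(G)|-1$. In particular, every extremal orientation is, first and foremost, a valid orientation, so whatever properties are built into validity are automatically inherited by $o$.

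Next I would appeal to the definition of validity given in the Introduction: an orientation of $G$ is called valid precisely when every vertex has at least one in-neighbor. The reason this requirement is imposed is that a vertex $v$ with $d^o_{-}(v)=0$ cannot be dominated by any subset $S\subseteq V$ under the directed rule $u$ dominates $v$ iff $uv\in A$, so no total dominating set can exist and $\gamma_t(o)$ would not even be finite. Therefore $d^o_{-}(v)\ge 1$ for all $v\in V$ is literally part of what it means for $o$ to be valid.

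Combining these two points, the observation follows with no further work. There is no main obstacle here: the statement is a one-line consequence of the definition, and I would record it exactly as the authors do, purely for later reference. Subsequent arguments will repeatedly exploit the non-zero in-degree condition when analysing the local structure around vertices of an extremal orientation, so labelling it now as Observation~\ref{o:1} is purely an expository convenience rather than a nontrivial claim to be proved.
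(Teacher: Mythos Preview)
Your proposal is correct and matches the paper's treatment exactly: the paper states this observation without any proof, since it is an immediate consequence of the definitions of ``valid'' and ``extremal'' orientation. Your unwinding of those definitions is precisely the intended justification.
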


\begin{observation}\label{o:gen}
Suppose that $G$ admits an extremal orientation, denoted by $o$. Then there is no vertex subset $S\subseteq V$ such that $S$ dominates at least $|S|+2$ vertices, i.e. $|\cup_{v\in S}~N(v)|\ge |S|+2$.
\end{observation}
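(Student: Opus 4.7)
The plan is to argue by contradiction. Suppose $o$ is extremal yet some $S \subseteq V$ satisfies $|\bigcup_{v \in S} N(v)| \ge |S|+2$; in the orientation $o$ we read this neighbourhood as the out-neighbourhood $N^o_{+}(v)$ (so that ``$u$ dominates $v$'' in $o$ matches $u \to v$, as introduced earlier), and the hypothesis becomes $|N^o_{+}(S)| \ge |S|+2$. Set $T := N^o_{+}(S)$ and $T^* := T \setminus S$; since $|T \cap S| \le |S|$ we obtain $|T^*| \ge |T| - |S| \ge 2$. The aim is to exhibit a total dominating set of $o$ of size at most $|V|-2$, contradicting $\gamma_t(o) = |V|-1$.

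The candidate total dominating set to analyse is $S' := V \setminus \{u_1, u_2\}$ with $u_1, u_2 \in T^*$; note $|S'| = |V|-2$. For every $v \in T$, the vertex has an in-neighbour in $S$, which lies in $S'$ since $S \cap \{u_1, u_2\} = \emptyset$; hence $T$-vertices are automatically dominated by $S'$. The only potential obstructions are vertices $v \in V \setminus T$ with $N^o_{-}(v) \subseteq \{u_1, u_2\}$; observe that $v \in V \setminus T$ forces $N^o_{-}(v) \subseteq V \setminus S$, as otherwise $v$ would lie in $T$, so such obstructions can arise only from vertices whose entire in-neighbourhood already sits inside $T^*$.

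The main step, and the crux of the argument, is to choose the pair $\{u_1, u_2\} \subseteq T^*$ so that no such obstructing vertex $v$ arises. I plan to bound the number of ``forbidden'' pairs by a local count: a vertex $v \in V \setminus T$ with $N^o_{-}(v) \subseteq T^*$ and $|N^o_{-}(v)| = 1$ forbids at most $|T^*|-1$ pairs, a $v$ with $|N^o_{-}(v)| = 2$ forbids exactly one pair, and $v$ with $|N^o_{-}(v)| \ge 3$ forbids none. If the total number of forbidden pairs is strictly less than $\binom{|T^*|}{2}$ then a good pair exists and $S'$ is a TDS of size $|V|-2$, yielding the contradiction. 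When the counting is tight, I will pass to an $S$ of minimum cardinality with $|N^o_{+}(S)| \ge |S|+2$; minimality forces each $s \in S$ to possess at least two \emph{private} out-neighbours in $T^*$ (out-neighbours not reached by $S \setminus \{s\}$), and this rigid private-neighbourhood structure, combined with Observation~\ref{o:1} (every in-degree is positive) and the extremality of $o$ (which, for every pair $\{u_1,u_2\}$ of vertices, forces the existence of some witness $w$ with $N^o_{-}(w) \subseteq \{u_1, u_2\}$), will eliminate the remaining obstructions and produce the desired TDS of size at most $|V|-2$.
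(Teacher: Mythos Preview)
Your approach is far more complicated than necessary, and the complication introduces a real gap. The paper's proof is three lines: starting from $S$, observe that at most $|V|-|S|-2$ vertices remain undominated; for each of them pick any in-neighbour (exists because $o$ is valid) and add it to $S$. The resulting set is a total dominating set of size at most $|S|+(|V|-|S|-2)=|V|-2$, contradicting extremality. That is the whole argument --- you build the small TDS \emph{up} from $S$, you do not carve it \emph{out} of $V$.

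By insisting instead on a TDS of the specific shape $V\setminus\{u_1,u_2\}$ with both $u_1,u_2\in T^*=N^o_+(S)\setminus S$, you box yourself into a restriction that need not hold. Concretely: with $S=\{a\}$ and arcs $a\to b,\ a\to c,\ a\to d,\ b\to e,\ c\to f,\ e\to a$, we have $T^*=\{b,c,d\}$, yet every pair in $T^*$ contains $b$ or $c$ and hence removes the unique in-neighbour of $e$ or $f$; no pair in $T^*$ works, although $\{a,b,c,e\}$ is a TDS of size $|V|-2$. Your counting argument therefore cannot close the proof on its own, and you acknowledge this by deferring to a ``tight case'' fallback. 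But that fallback is only a promise: you assert that minimality of $S$ gives private out-neighbours and that extremality gives witnesses $w$ with $N^o_-(w)\subseteq\{u_1,u_2\}$, and then say these ``will eliminate the remaining obstructions'' --- without saying what the contradiction actually is or how those two ingredients combine. That step is the entire content of the proof in your scheme, and it is missing. The fix is simply to abandon the $V\setminus\{u_1,u_2\}$ ansatz and extend $S$ greedily as in the paper.
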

\begin{proof}
    Let us suppose on the contrary that there exists $S\subseteq V$ such that $S$ dominates at least $|S|+2$ vertices, i.e. $|\cup_{v\in S}~N(v)|\ge |S|+2$. Thus we need to dominate the remaining at most $|V(G)|-|S|-2$ vertices. Since each vertex has at least one in-neighbor because $o$ is a valid orientation, hence we can select at most $|V(G)|-|S|-2$ further vertices to extend $S$ such that we get a total dominating set of size at most $|V(G)|-2$, which is a contradiction.
\end{proof}

\begin{corollary}\label{c:2} 
In an extremal orientation $o$ every out-degree must be at most 2, i.e. $d^o_{+}(v)\le 2$ for every $v\in V$.
\end{corollary}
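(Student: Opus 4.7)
The plan is to derive the corollary as a one-line consequence of Observation \ref{o:gen} applied to the singleton $S = \{v\}$. Fix an extremal orientation $o$ and an arbitrary vertex $v \in V$. In the oriented graph, the set of vertices that $\{v\}$ dominates is precisely the out-neighborhood $N^o_{+}(v)$, since $v$ dominates $u$ exactly when $vu$ is an arc of $o$. If we had $d^o_{+}(v) \geq 3$, then taking $S = \{v\}$ would yield a set of size $1$ dominating at least $3 = |S| + 2$ vertices, directly contradicting Observation \ref{o:gen}. Hence $d^o_{+}(v) \leq 2$ for every $v \in V$.

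The only point worth flagging is a small notational compatibility check: the "$N(v)$" appearing in the statement of Observation \ref{o:gen} must be read as the out-neighborhood $N^o_{+}(v)$ in the oriented graph, which is exactly how that observation's proof uses it (the uncovered vertices are those not reached by arcs out of $S$, and they are covered by selecting one in-neighbor each). With this reading in place the argument is immediate and no real obstacle arises; the corollary is essentially a bookkeeping restatement of Observation \ref{o:gen} in the special case $|S| = 1$.
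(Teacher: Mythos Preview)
Your proof is correct and matches the paper's intended approach: Corollary~\ref{c:2} is stated in the paper without proof, as an immediate consequence of Observation~\ref{o:gen} applied to the singleton $S=\{v\}$, which is exactly what you do. Your remark about reading $N(v)$ as $N^o_{+}(v)$ in the oriented setting is well taken and consistent with how the paper uses the observation.
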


\begin{corollary}\label{c:3}
    In an extremal orientation $o$, if $u\ne v$ are two distinct vertices such that their out-degree is 2 then $|N^o_{+}(u)\cup N^o_{+}(v)|\le 3$ must hold. Equivalently $u$ and $v$ must have at least one common out-neighbor.
\end{corollary}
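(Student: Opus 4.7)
}
The plan is to derive the statement as a direct consequence of Observation \ref{o:gen} applied to the two-element set $S=\{u,v\}$. Specifically, suppose $o$ is an extremal orientation and let $u\ne v$ be two vertices with $d^o_+(u)=d^o_+(v)=2$. I would begin by noting the obvious containment $N^o_+(w)\subseteq N(w)$ for every vertex $w$, since each out-neighbor of $w$ in $o$ is in particular a neighbor of $w$ in the underlying undirected graph $G$.

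Next, I would apply Observation \ref{o:gen} to $S=\{u,v\}$: no subset $S$ of size $2$ can dominate at least $|S|+2=4$ vertices, so
\[
|N(u)\cup N(v)|\le 3.
\]
Combining this with the containment above gives
\[
|N^o_+(u)\cup N^o_+(v)|\le |N(u)\cup N(v)|\le 3.
\]
Since $|N^o_+(u)|=|N^o_+(v)|=2$, the inclusion-exclusion identity
\[
|N^o_+(u)\cap N^o_+(v)|=|N^o_+(u)|+|N^o_+(v)|-|N^o_+(u)\cup N^o_+(v)|\ge 2+2-3=1
\]
shows that $u$ and $v$ share at least one common out-neighbor, which is exactly the equivalent reformulation stated in the corollary.

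There is essentially no obstacle here: the whole argument is a one-line reduction to Observation \ref{o:gen}, the only mildly delicate point being the clean separation between the directed out-neighborhoods $N^o_+(\cdot)$ appearing in the statement and the undirected neighborhoods $N(\cdot)$ appearing in Observation \ref{o:gen}, which is bridged by the trivial inclusion $N^o_+(w)\subseteq N(w)$.
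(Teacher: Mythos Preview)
Your overall approach---apply Observation~\ref{o:gen} to the two-element set $S=\{u,v\}$ and then use inclusion--exclusion---is exactly what the paper intends; the result is stated as a corollary with no separate proof precisely because it is meant to be immediate from that observation.

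There is, however, a genuine false step in your write-up, caused by reading the symbol $N(v)$ in Observation~\ref{o:gen} as the \emph{undirected} neighbourhood. In context (``$S$ dominates at least $|S|+2$ vertices'' with respect to the orientation $o$, and the proof of the observation speaks of in-neighbours) that $N(v)$ stands for the out-neighbourhood $N^o_+(v)$; the paper's notation is admittedly sloppy there. With your literal reading, the intermediate inequality $|N(u)\cup N(v)|\le 3$ is not true. For a concrete counterexample take the extremal orientation in Figure~\ref{fig:1} and pick two cycle vertices from different cycle components, each of which sends an arc to $s$; both have out-degree $2$ and share $s$ as a common out-neighbour, yet their undirected neighbourhoods also contain their respective cycle-predecessors, so $|N(u)\cup N(v)|\ge 5$.

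The fix is simply to drop the detour through undirected neighbourhoods: Observation~\ref{o:gen} applied to $S=\{u,v\}$ already gives $|N^o_+(u)\cup N^o_+(v)|\le |S|+1=3$ directly, after which your inclusion--exclusion line finishes the argument. The ``mildly delicate point'' you flagged (bridging $N^o_+$ and $N$ via the inclusion $N^o_+(w)\subseteq N(w)$) is thus not a step in the proof at all---it was an artefact of taking the observation's notation too literally.
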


\begin{observation}\label{o:4} In an extremal orientation $o$ there can be at most one vertex $v$ such that $d^o_{+}(v)=0$.
\end{observation}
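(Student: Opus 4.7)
The plan is a short proof by contradiction, following the same template as Corollaries \ref{c:2} and \ref{c:3}. Suppose, for the sake of contradiction, that there exist two distinct vertices $u, v \in V$ with $d^o_+(u) = d^o_+(v) = 0$. I will show that $S := V \setminus \{u, v\}$, which has size $|V|-2$, is itself a total dominating set of $o$, contradicting the extremality assumption $\gamma_t(o) = |V|-1$.

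The key observation is that a vertex with out-degree zero contributes no outgoing arcs and thus cannot appear as an in-neighbor of any vertex. Consequently $N^o_-(w) \subseteq V \setminus \{u,v\} = S$ for every $w \in V$. Since $o$ is a valid orientation, Observation \ref{o:1} ensures $d^o_-(w) \ge 1$, and hence $N^o_-(w) \cap S$ is non-empty. This means every vertex of $V$ has an in-neighbor inside $S$, so $S$ is a total dominating set of size $|V|-2$, contradicting $\gamma_t(o) = |V|-1$.

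There is no substantive obstacle here; equivalently, one can phrase the argument as a direct application of Observation \ref{o:gen} to the set $S = V \setminus \{u, v\}$, which is forced to dominate all $|V| = |S|+2$ vertices because no other vertex is able to provide the required in-neighbors. The underlying intuition is simply that sinks are useless for total domination, and an extremal orientation cannot afford to waste two vertices in this manner.
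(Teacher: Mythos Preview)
Your proof is correct and takes essentially the same approach as the paper: both argue that vertices of out-degree zero contribute nothing to domination, so removing all such vertices from $V$ still yields a total dominating set, forcing at most one sink in an extremal orientation. Your version is slightly more explicit in verifying that $S=V\setminus\{u,v\}$ dominates every vertex via Observation~\ref{o:1}, but the underlying idea is identical.
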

\begin{proof}
    A vertex $v$ whose out-degree is zero can not dominate any vertices, hence $v$ cannot be contained in any minimal total dominating set of that orientation. Since $\gamma_t(G)=|V(G)|-1$ must hold hence there can be at most one such vertex.
\end{proof}


\section{Proof of the main result} 


In this section, we characterize those connected graphs $G\in\cC$ that admits an extremal orientation. We will present two proofs for the main case of our characterization, one of them is a thorough case analysis, and the other one has a recursive flavor to it. We opted to include both proofs although we think that the second proof is a bit more elegant, but the characterized families of graphs can be immediately seen from the first proof. Thus we will be able to easily identify those graphs that satisfy the $\dom_t(G)<\DOM_t(G)=|V(G)|-1$ inequality.

\begin{proof}[Proof of Theorem \ref{t:main} (case analysis)]
By Proposition \ref{p:concomp}, we can assume that $G\in \cC$ is connected, and one can eventually extend the constructions of the characterization with some disjoint cycle components.

The proof is based on a case analysis. In the first case let us suppose that a connected graph $G\in\cC$ admits an extremal orientation $o$ and there exists a vertex $s$ that has 0 out-neighbor, i.e. $d^o_{+}(s)=0$. 

Since $s$ cannot be included in any minimal total dominating set hence in the extremal orientation $o$, the set $V\setminus\{s\}$ must be the only minimal total dominating set. We need to understand the reasons why can't we omit another vertex from $V\setminus\{s\}$ to get a smaller total dominating set with respect to $o$.

If $d^o_{-}(s)\ge 2$, then consider the directed graph, denoted by $(G-s,o)$, that we get by deleting $s$ from $G$ and restrict $o$ to $V\setminus \{s\}$. We show that $\gamma_t(G-s,o)=|V(G-s,o)|=|V(G)|-1$ must hold. 
If there exists a total dominating set in $(G-s,o)$ with at most $|V(G)|-2$ vertices then by adding arbitrary vertices to this total dominating set $S$ we can get a total dominating set of size exactly $|V(G)|-2=|S|$. But $d^o_{-}(s)\ge 2$ implies that $N^o_{-}(s) \cap S\ne \emptyset$ by the pigeonhole principle, which means that $S$ is a total dominating set in $G$ with respect to $o$, too. That contradicts the fact that $o$ is an extremal orientation.

Hence $\gamma_t(G-s,o)=|V(G-s,o)|=|V(G)|-1$ holds and consequently $G-s$ must be a disjoint union of cycles according to Result \ref{r:equality}. Notice here that it might happen that $s$ is a cut vertex in the connected graph $G$. What can we say about those edges that are incident with $s$? 

Since there are no out-neighbors of $s$, and we assumed that $d^o_{-}(s)\ge 2$, there must be at least 2 edges incident with $s$ in $G$. Moreover, since $G$ is connected hence $s$ must be connected to every connected component of $G-s$ thus $\max \{2,\ell\}\le d(s)\le |V(G)|-1$ where $\ell$ is the number of connected components of $G-s$. Hence the graph $G$ can have a few cycles and a vertex $s$ that is connected to every such cycles with at least one edge.

\begin{figure}[H]
    \centering
    \begin{tikzpicture}[thick, main/.style = {draw, circle, fill, minimum size=6pt,inner sep=1pt, outer sep=1pt}]
\node[main] (s) {};
\node[above=.2cm] at (s) {$s$};
\node[main] (a) [above right = 2cm of s] {}; 
\node[main] (b) [right = 1cm of a] {}; 
\node[main] (c) at (2.2, 2.7) {};
\node[main] (d) [below right = 2cm of s]{}; 
\node[main] (e) [below = 1 cm of d]{}; 
\node[main] (g) [right = 1 cm of d] {}; 
\node[main] (f) [below = 1cm of g] {};
\node[main] (h) [left = 4cm of s] {};
\node[main] (i) [below = 1cm of h] {};
\node[main] (j) [right = 1cm of i] {};
\node[main] (k) [above = 1cm of j] {};
\draw[->] (a) -- (b);
\draw[->] (b) -- (c);
\draw[->] (c) -- (a);
\draw[->] (d) -- (e);
\draw[->] (e) -- (f);
\draw[->] (f) -- (g);
\draw[->] (g) -- (d);
\draw[->] (h) -- (i);
\draw[->] (i) -- (j);
\draw[->] (j) -- (k);
\draw[->] (k) -- (h);
\draw[->] (a) -- (s);
\draw[->] (k) -- (s);
\draw[->] (j) -- (s);
\draw[->] (g) -- (s);
\draw[->] (e) -- (s);
\draw[->] (d) -- (s);
\end{tikzpicture}   
    \caption{An extremal orientation of a graph with $d^o_+(s)=0$ and $d^o_-(s)\ge 2$}
    \label{fig:1}
\end{figure}
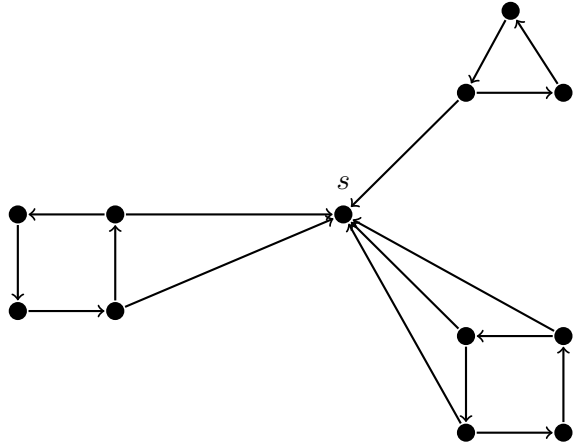

If the vertex $s$ has exactly one in-neighbor, denoted by $w_1$, then $w_1$ is the only vertex dominating $s$. Thus $w_1$ will certainly be in every total dominating set. If $w_1$ also has exactly one in-neighbor $w_2$, then by a similar argument the vertex $w_2$ must belong to any total dominating set. If we repeat this step then is it true that we eventually find a vertex $w_k$ among these $w_i$'s (for $i\ge 1$) such that it has at least two in-neighbors?

Since $G$ is a finite graph, the only case when there is no such vertex is if these steps have taken us back to a vertex $w_i$ already on the path (for some $1\le i< k-1$). But then these vertices form a directed cycle and a directed path from it. The subgraph induced by these vertices cannot have any more edges, because we have already included all the in-neighbors of these vertices. On the other hand, there are no edges going out from this $W\cup\{s\}$ part of $G$ because then $W$ would violate the condition in Observation \ref{o:gen}, since $W$ dominates all vertices of $W\cup\{s\}$ and then some if there would be outgoing edges from any vertex of $W$. Hence in this case the graph $G$ would belong to the family $\cF$ defined earlier.

\begin{figure}[H]
    \centering
\begin{tikzpicture}[thick, main/.style = {draw, circle, fill, minimum size=6pt,inner sep=1pt, outer sep=1pt}]
\node[main] (s) {};
\node[above] at (s) {$s$};
\node[main] (1) [left = 1.5cm of s] {};
\node[above] at (1) {$w_1$};
\node[main] (2) [left = 1.5cm of 1] {};
\node[above] at (2) {$w_2$};
\node[main] (3) [left = 1.5cm of 2] {};
\node[above] at (3) {$w_{i-1}$};
\node[main] (4) [left = 1.5cm of 3] {};
\node[above] at (4) {$w_i$};
\node[main] (5) [left = 1.5cm of 4] {};
\node[above] at (5) {$w_{i+1}$};
\node[main] (6) [left = 1.5cm of 5] {};
\node[above] at (6) {$w_{k-1}$};
\node[main] (7) [left = 1.5cm of 6] {};
\node[above] at (7) {$w_{k}$};
\draw[->] (1) -- (s);
\draw[->] (2) -- (1);
\node at (-4.45, 0) {$\dots$};
\draw[->] (4) -- (3);
\draw[->] (5) -- (4);
\node at (-9.8, 0) {$\dots$};
\draw[->] (7) -- (6);
\draw[->] (4) to [out=270,in=270,looseness=0.3] (7);
\draw[thick, dashed] (-7.2, 0) ellipse (6cm and 1.5cm);
\node at (-5, 2) {$W$};

\end{tikzpicture}
    \caption{An extremal orientation of a graph with $d^o_+(s)=0$ and $d^o_-(s)=1$, if there is no vertex that has at least 2 in-neighbors}
    \label{fig:2}
\end{figure}
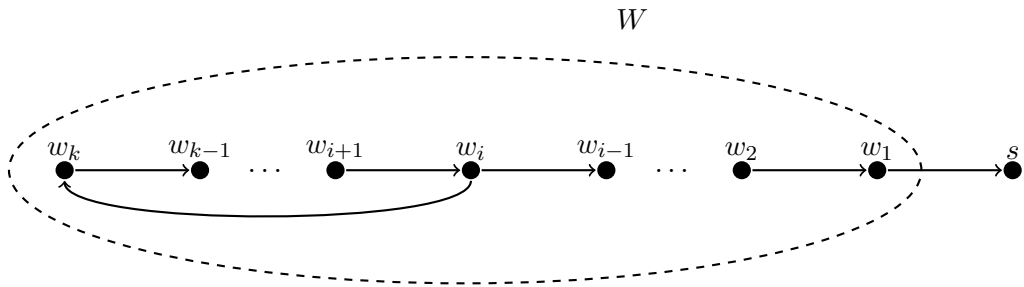

Let us suppose now that there exists a first vertex $w_k$ such that $d^o_-(w_k)\ge 2$ and there is a directed path $w_k w_{k-1} \dots w_2 w_1 s$ from $w_k$ to $s$ and $d^o_-(w_i)=1=d^o_-(s)$ holds for $1\le i< k$. Let $W=\{w_1,w_2,\dots,w_k\}$. 

If every in-neighbor of $w_k$ belongs to $W$ then there can be no further vertices of $G$. Suppose on the contrary that $\emptyset \ne V\setminus (W\cup \{s\})$. Since $G$ is connected there must be an edge $e$ between $W\cup \{s\}$ and $V\setminus (W\cup \{s\})$. But $e$ cannot be directed towards $W\cup\{s\}$ in the extremal orientation $o$ because we have already counted every incoming edge for these vertices. On the other hand, if $e$ is directed towards $V\setminus (W\cup \{s\})$ then $W$ would violate the condition in Observation \ref{o:gen} again. Hence $G$ is just a path from $w_k$ to $s$ and $w_k$ might be adjacent to any other $w_i$, too.

\begin{figure}[H]
    \centering
    \begin{tikzpicture}[thick, main/.style = {draw, circle, fill, minimum size=6pt,inner sep=1pt, outer sep=1pt}]
\node[main] (s) {};
\node[above] at (s) {$s$};
\node[main] (w) [left = 1.5cm of s] {};
\node[above] at (w) {$w_1$};
\node[main] (x) [left = 1.5cm of w] {};
\node[above] at (x) {$w_2$};
\node[main] (m) [left = 1.5cm of x] {};
\node[above] at (m) {$w_{k-2}$};
\node[main] (k) [left = 1.5cm of m] {};
\node[above] at (k) {$w_{k-1}$};
\node[main] (y) [left = 1.5cm of k] {};
\node[above] at (y) {$w_k$};
\node at (-4.5, 0) {$\dots$};
\draw[->] (w) -- (s);
\draw[->] (x) -- (w);
\draw[->] (k) -- (m);
\draw[->] (y) -- (k);
\draw[->] (w) to [out=270,in=270,looseness=0.5] (y);
\draw[->] (m) to [out=270,in=270,looseness=0.5] (y);
\draw[thick, dashed] (-5.3, 0) ellipse (4.5cm and 1.5cm);
\node at (-3, 2) {$W$};
\end{tikzpicture}
    \caption{An extremal orientation of a graph with $d^o_+(s)=0$ and $d^o_-(s)=1$, if there is a vertex $w_k$ that has at least 2 in-neighbors but all of these in-neighbors belong to $W$}
    \label{fig:3}
\end{figure}
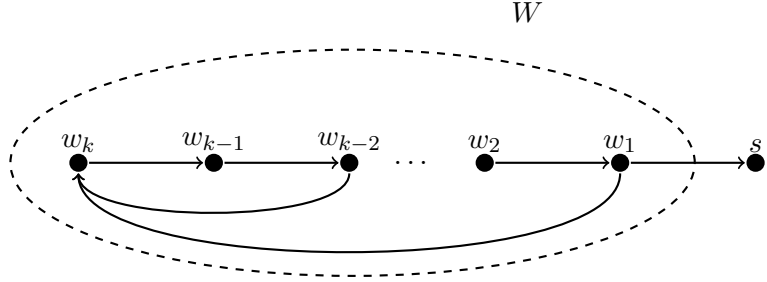

Suppose now that there is at least one in-neighbor of $w_k$ from both $W$ and $V\setminus(W\cup \{s\})$. Similarly to the previous cases, if there exists a directed edge coming out of $W$ towards $V\setminus(W\cup \{s\})$ then $W$ would violate the condition of Observation \ref{o:gen}. Since $V\setminus(W\cup \{s\})\ne\emptyset$, the directed graph $(G|_{V\setminus(W\cup \{s\})},o)$ restricted to $V\setminus(W\cup \{s\})$ must satisfy $\gamma_t(G|_{V\setminus(W\cup \{s\})},o)=|V(G\mid_{V\setminus(W\cup \{s\})},o)|=|V|-|W|-1$ because otherwise the union of $W$ and a minimal total dominating set in $(G|_{V\setminus(W\cup \{s\})},o)$ would be a total dominating set of $G$ such that the total size is at most $|V|-2$ which is a contradiction. Therefore $(G|_{V\setminus(W\cup \{s\})},o)$ must be a disjoint union of directed cycles and from each component there must be at least one directed edge going towards $W$, but more specifically to $w_k$ because the other vertices have just one in-neighbor in total.

\begin{figure}[H]
    \centering
    \begin{tikzpicture}[thick, main/.style = {draw, circle, fill, minimum size=6pt,inner sep=1pt, outer sep=1pt}]
\node[main] (s) {};
\node[above left] at (s) {$s$};
\node[main] (w) [below left = 1cm of  s] {};
\node[above left] at (w) {$w_1$};
\node[main] (w2) [below left of = w] {};
\node[above left] at (w2) {$w_2$};
\node[main] (x) [below left of = w2] {};
\node[above left] at (x) {$w_{k-2}$};
\node[main] (x2) [below left of = x] {};
\node[above left] at (x2) {$w_{k-1}$};
\node[main] (k) [below left of = x2] {};
\node[below=.2cm] at (k) {$w_k$};
\node[main] (f) [left = 2cm of k] {};
\node[main] (y) [left of = f] {};
\node[main] (a) [above of = f] {};
\node[main] (z) [left of = a]{};
\node[main] (h) [below right = 1cm of s] {};
\node[main] (i) [right of = h] {};
\node[main] (j) [below of = i] {};
\node[main] (l) [left of = j] {};
\draw[->] (w) -- (s);
\draw[->] (w2) -- (w);
\node[rotate=45] at (-1.92, -1.92) {$\dots$};
\draw[->] (k) -- (x2);
\draw[->] (x2) -- (x);
\draw[->] (y) -- (f);
\draw[->] (f) -- (k);
\draw[->] (z) -- (y);
\draw[->] (a) -- (z);
\draw[->] (f) -- (a);
\draw[->] (h) -- (i);
\draw[->] (i) -- (j);
\draw[->] (j) -- (l);
\draw[->] (l) -- (h);
\draw[->] (a) -- (k);
\draw[->] (w) to [out=270,in=360,looseness=0.7] (k);
\draw[->] (l) to [out=270,in=330,looseness=0.7] (k);
\draw[thick, dashed, rotate=45] (-3.3, 0) ellipse (2.7cm and 1.3cm);
\node at (-3, -0.5) {$W$};
\end{tikzpicture}
    \caption{An extremal orientation of a graph with $d^o_+(s)=0$ and $d^o_-(s)=1$, if there is a vertex $w_k$ that has at least 2 in-neighbors but $N^o_-(w_k)\cap W\ne\emptyset\ne N^o_-(w_k)\cap(V\setminus(W\cup \{s\}))$.}
    \label{fig:4}
\end{figure}
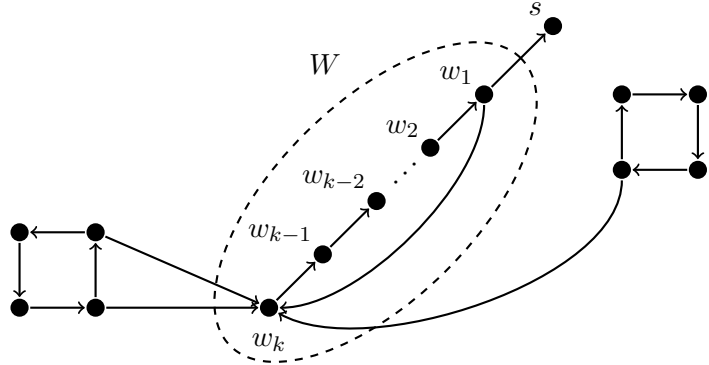

Suppose now that $d^o_-(w_k)\ge 2$ but $N^o_-(w_k)\cap W=\emptyset$, in other words all the in-neighbors of $w_k$ belong to $V\setminus(W\cup\{s\})$. Observation \ref{o:gen} implies that $\gamma_t(G|_{V\setminus(W\cup\{s\})})\ge |V\setminus(W\cup\{s\})|-1$. But if there exists a total dominating set $S$ in $G|_{V\setminus(W\cup\{s\})}$ such that $|S|=|V\setminus(W\cup\{s\})|-1$ then $W\cup S$ would again violate the condition in Observation \ref{o:gen} since $w_k$ is also dominated because $d^o_-(w_k)\ge 2$ and therefore $N^o_-(w_k)\cap S\ne\emptyset$.

Hence $\gamma_t(G|_{V\setminus(W\cup\{s\})})=|V\setminus(W\cup\{s\})|$ must hold, and consequently $G|_{V\setminus(W\cup\{s\})}$ have to be the disjoint union of some directed cycles, but there must be at least one directed edge to $w_k$ from each components, and at least two in-coming directed edges in total to $w_k$.

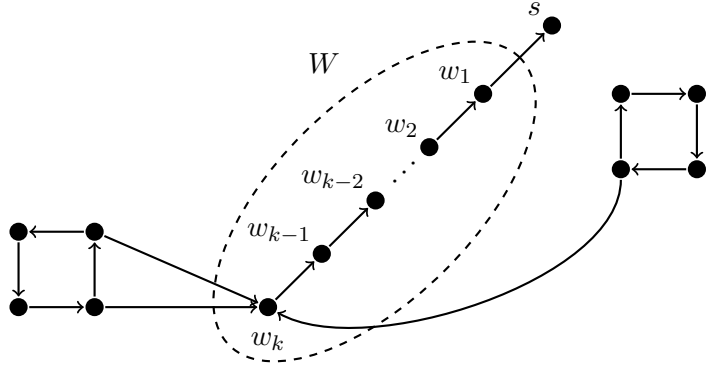
\begin{figure}[H]
    \centering
    \begin{tikzpicture}[thick, main/.style = {draw, circle, fill, minimum size=6pt,inner sep=1pt, outer sep=1pt}]
\node[main] (s) {};
\node[above left] at (s) {$s$};
\node[main] (w) [below left = 1cm of  s] {};
\node[above left] at (w) {$w_1$};
\node[main] (w2) [below left of = w] {};
\node[above left] at (w2) {$w_2$};
\node[main] (x) [below left of = w2] {};
\node[above left] at (x) {$w_{k-2}$};
\node[main] (x2) [below left of = x] {};
\node[above left] at (x2) {$w_{k-1}$};
\node[main] (k) [below left of = x2] {};
\node[below=.2cm] at (k) {$w_k$};
\node[main] (f) [left = 2cm of k] {};
\node[main] (y) [left of = f] {};
\node[main] (a) [above of = f] {};
\node[main] (z) [left of = a]{};
\node[main] (h) [below right = 1cm of s] {};
\node[main] (i) [right of = h] {};
\node[main] (j) [below of = i] {};
\node[main] (l) [left of = j] {};
\draw[->] (w) -- (s);
\draw[->] (w2) -- (w);
\node[rotate=45] at (-1.92, -1.92) {$\dots$};
\draw[->] (x2) -- (x);
\draw[->] (k) -- (x2);
\draw[->] (y) -- (f);
\draw[->] (f) -- (k);
\draw[->] (z) -- (y);
\draw[->] (a) -- (z);
\draw[->] (f) -- (a);
\draw[->] (h) -- (i);
\draw[->] (i) -- (j);
\draw[->] (j) -- (l);
\draw[->] (l) -- (h);
\draw[->] (a) -- (k);
\draw[->] (l) to [out=270,in=330,looseness=0.7] (k);
\draw[thick, dashed, rotate=45] (-3.3, 0) ellipse (2.7cm and 1.3cm);
\node at (-3, -0.5) {$W$};
\end{tikzpicture}
    \caption{An extremal orientation of a graph with $d^o_+(s)=0$ and $d^o_-(s)=1$, if there is a vertex $w_k$ that has at least 2 in-neighbors but $N^o_-(w_k)\cap W=\emptyset$.}
    \label{fig:5}
\end{figure}

Let us summarize what are those graph families for which there exists an extremal orientation $o$ such that there also exists a vertex $s$ with $d^o_+(s)=0$. If the minimum degree $\delta(G)\ge 2$ then $G$ must contain $\ell$ vertex-disjoint cycles (for some $\ell\ge 1$) and one additional vertex that is adjacent to at least one vertex from each cycle component (see Figure \ref{fig:1}), thus $G\in\cF_2$.

If $\delta(G)=1$ then there must be exactly one vertex of degree $1$, and let us call this vertex $s$. Then $G$ must contain a path of length $k$ from $s$, denoted by $sw_1 w_2 \dots w_{k-1} w_k$. If there are no further vertices in $G$ then there must exist at least one additional edge in $G$ but all of the additional edges must be incident with $w_k$, the other end of the path (see Figure \ref{fig:2} and \ref{fig:3}), thus $G\in \cF_1$ (where $\ell=0$ in the definition of $\cF_1$).

On the other hand, if there are more vertices in $G$, i.e. $V\setminus\{w_1,w_2,\dots,w_k,s\}\ne\emptyset$, then the rest must contain $\ell$ vertex-disjoint cycles (for some $\ell\ge 1$) and all of these cycle components must be connected to $w_k$, the other end of the path. Moreover, every additional edge must be incident with $w_k$, too (see Figure \ref{fig:4} and \ref{fig:5}), thus $G\in \cF_1$. 


\bigskip

\noindent{\it 2nd proof of the case when $d^o_+(s)=0$ of Theorem \ref{t:main} (recursive approach).}
Let us suppose again, that $G\in \cC$ is connected because one can eventually extend the constructions of the characterization with some disjoint cycle components by Proposition \ref{p:concomp}.

As in the first solution, let us suppose that $G$ admits an extremal orientation $o$ such that there exists a vertex $s$ that has not got an out-neighbor, i.e. $d^o_{+}(s)=0$. Since $s$ cannot be included in any minimal total dominating set hence the set $V\setminus\{s\}$ must be the only minimal total dominating set with respect to $o$. Why can't we find a smaller total dominating set? The reason is that each vertex $v\in V\setminus \{s\}$ must dominate a vertex $u_v\in N^o_+(v)$ such that no other vertex dominates $u$, i.e. $u\notin \cup_{w\in V\setminus\{s\}, v\ne w} N^o_+(w)$. 

Let us consider the directed graph, denoted by $(G-s,o)$, that we get by deleting $s$ from $G$ and restrict $o$ to $V\setminus \{s\}$. 
Then either $\gamma_t(G-s,o) = |V(G-s,o)|=|V(G,o)|-1$ or $\gamma_t(G-s,o) < |V(G-s,o)|=|V(G,o)|-1$. If $\gamma_t(G-s,o)=|V(G-s,o)|=|V(G,o)|-1$ holds, consequently $G-s$ must be a disjoint union of cycles according to Result \ref{r:equality}. Thus $G\in\cF_2$ (see Figure \ref{fig:1}). 

Suppose now that $\gamma_t(G-s,o) < |V(G-s,o)|=|V(G,o)|-1$. We get a contradiction if $\gamma_t(G-s,o) < |V(G,o)|-2$, because if we add one of the in-neighbors of $s$ to the minimal total dominating set in $(G-s,o)$, we get a total dominating set in $G$ with size at most $|V(G,o)|-2$. Hence $\gamma_t(G-s,o) = |V(G,o)|-2=|V(G-s,o)|-1$ holds.

How many in-neighbors can $s$ have? 
If $d^o_{-}(s)\ge 2$, then the total dominating set of size $|V(G,o)-2|$ in $(G-s,o)$ is also a total dominating set in $G$ by the pigeonhole principle, thus it contradicts that $o$ is an extremal orientation. Together with Observation \ref{o:1}, it yields that $d^o_{-}(s) = 1$. Let us denote the only in-neighbor of $s$ with $w_1$, hence $1\le d^o_+(w_1)\le 2$ by Corollary \ref{c:2}.

If $d^o_+(w_1)=1$, then it implies, that in $(G-s,o)$ we get an analogue situation but now $w_1$ plays the role of $s$: we have an extremal orientation $o$ in $(G-s,o)$ and a vertex $w_1$ such that $d^{(G-s,o)}_+(w_1)=0$, and $\gamma_t(G-s,o) = |V(G,o)|-2 = |V(G-s,o)|-1$. Again, if $\gamma_t(G-s-w_1,o) = |V(G,o)|-2 = |V(G-s,o)|-1$ then $G|_{V\setminus\{s,w_1\}}$ is a vertex-disjoint union of cycles by Result \ref{r:equality}, and every cycle component must be connected to $w_1$ thus $G\in \cF_1$. On the other hand, if $\gamma_t(G-s-w_1,o) < |V(G,o)|-2 = |V(G-s,o)|-1$ then by the same argument $\gamma_t(G-s-w_1,o) = |V(G,o)|-3 = |V(G-s,o)|-2$ and $d^{(G-s,o)}_-(w_1)=1$ is necessary because otherwise it would contradict Observation \ref{o:gen}. Let us denote the only in-neighbor of $w_1$ with $w_2$, and therefore $1\le d^o_+(w_2)\le 2$ by Corollary \ref{c:2}. If $d^o_+(w_2)=1$, then we can repeat the same argument and either get a graph from $\cF_1$ or deduce that there exists a vertex $w_3$ with similar properties as $w_1$ and $w_2$ before. We can iterate this step finitely many times, and that is the reason why we call this 2nd proof recursive.

Suppose that eventually we get a vertex $w_i$ such that $d^o_+(w_i)=2$, and denote the set $\{w_1,w_2,\dots,w_i\}$ with $W_i$. Let us denote the graph that we get by restricting the orientation $o$ to $V\setminus\{s,w_1,w_2,\dots,w_{i-1}\}$ by $(G_i,o)$. Then $d^{(G_i,o)}_+(w_i)=1$, and let us denote the only out-neighbor of $w_i$ in $(G_i,o)$ with $t_1$. By Observation \ref{o:gen}, we also know that $\gamma_t(G_i,o)=|V(G_i)|-1$ and $d^{(G_i,o)}_-(w_i)=1$. Moreover the only minimal total dominating set $S_i$ in $(G_i,o)$ must be $S_i=V(G_i)\setminus\{w_i\}=V\setminus(W_i\cup\{s\})$ because otherwise $S_i\cup W_i$ would be a total dominating set in $G$ of size $|V(G)|-2$ which is a contradiction. 

We need to understand, why can't we substitute a vertex from $S_i$ with $w_i$ and get a total dominating set of the same size. As it was pointed out earlier, the reason is that for any vertex $v\in S_i$ there exists a vertex $u_v\in N^{(G_i,o)}_+(v)$ such that $v$ is the only in-neighbor of $u_v$, thus $v$ must belong to any total dominating set. Consider $t_1$ and let us denote the vertex that is dominated only by $t_1$ with $t_2$. Then consider the vertex $t_3$ that is only dominated by $t_2$, etc. How can this process stop? Since the graph is finite, there is only one way to stop when the next $t_j$ coincide with $w_i$ (the only vertex outside of $S_i$). Note that these vertices form a directed cycle. 

It might happen that $V(G_i)\setminus\{t_1,t_2,\dots,t_{j-1},w_i\}\ne\emptyset$ then consider an arbitrary vertex $v$ from $V(G_i)\setminus\{t_1,t_2,\dots,t_{j-1},w_i\}$ and start a similar process from $v$ and always move to a new vertex that was dominated only by the previous vertex. This process would stop again by arriving back to $v$. We repeat this process until every vertex of $G_i$ is covered by one of these directed cycles. This way we determined a subgraph $(H_i,o)$ of $(G_i,o)$ that contains vertex-disjoint directed cycles. Since $S_i$ is a total dominating set in $(G_i,o)$ there must exists a directed edge from a vertex of $S_i$ to $t_1$. Observe that even in $(H_i,o)$ there exists an outgoing edge from every vertex, thus By Corollary \ref{c:2} and \ref{c:3}, we know that those vertices that has two out-neighbors must have a common out-neighbor with $w_i$ which has also two out-neighbors: $w_{i-1}$ and $t_1$. But $w_i$ was the only in-neighbor of $w_{i-1}$ hence every vertex $u$ in $(G_i,o)$ such that $d^{(G_i,o)}_+(u)=d^o_+(u)=2$ must be adjacent to $t_1$ as well, and consequently $d^o_+(t_1)=1$ holds. There can be no further edges in $(G_i,o)$ hence we can deduce that the underlying graph must belong to $\cF_1$ (see Figure \ref{fig:6}), where $t_1$ plays the role of $w_k$ from the first proof, since there is a directed path $t_1 t_2 \dots t_{j-1} w_i w_{i-1}\dots w_1 s$ from $t_1$ to $s$.

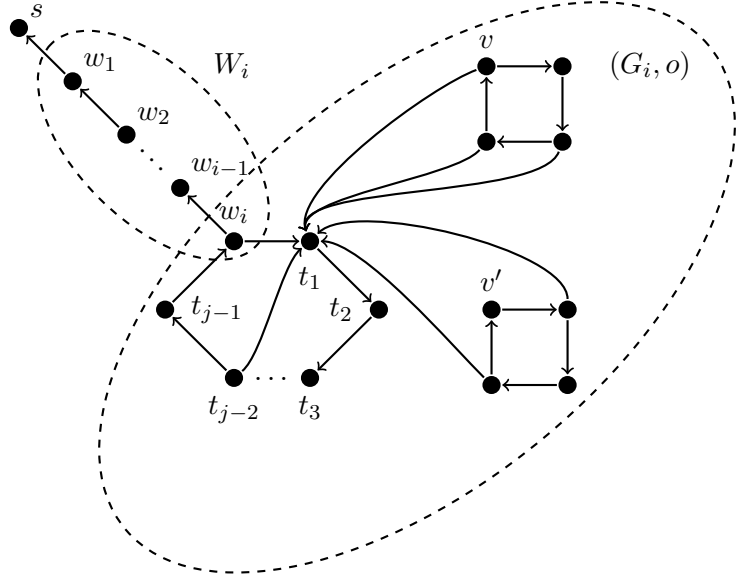
\begin{figure}[H]
    \centering
    \begin{tikzpicture}[thick, main/.style = {draw, circle, fill, minimum size=6pt,inner sep=1pt, outer sep=1pt}]
\node[main] (s) {};
\node[above right] at (s) {$s$};
\node[main] (w1) [below right of = s] {};
\node[above right] at (w1) {$w_1$};
\node[main] (w2) [below right of = w1] {};
\node[above right] at (w2) {$w_2$};
\node[main] (wk1) [below right of = w2] {};
\node[above right] at (wk1) {$w_{i-1}$};
\node[main] (wk) [below right of = wk1] {};
\node[above=.1cm] at (wk) {$w_{i}$};
\node[main] (t1) [right of = wk] {};
\node[below=.2cm] at (t1) {$t_1$};
\node[main] (u1) [below right = 1 cm of t1] {};
\node[left=.2cm] at (u1) {$t_{2}$};
\node[main] (u2) [below left = 1 cm of u1] {};
\node[below=.1cm] at (u2) {$t_{3}$};
\node[main] (ul1) [left of = u2] {};
\node[below=.1cm] at (ul1) {$t_{j-2}$};
\node[main] (ul) [above left = 1 cm of ul1]{};
\node[right=.2cm] at (ul) {$t_{j-1}$};

\node[main] (h) [above right = 3cm of t1] {};
\node[above=.1cm] at (h) {$v$};
\node[main] (i) [right of = h] {};
\node[main] (j) [below of = i] {};
\node[main] (l) [left of = j] {};

\node[main] (h1) [right = 1.2cm of u1] {};
\node[above=.1cm] at (h1) {$v'$};
\node[main] (i1) [right of = h1] {};
\node[main] (j1) [below of = i1] {};
\node[main] (l1) [left of = j1] {};

\draw[->] (w1) -- (s);
\draw[->] (w2) -- (w1);
\node[rotate=-45] at (1.77, -1.77) {$\dots$};
\draw[->] (wk) -- (wk1);
\draw[->] (wk) -- (t1);
\draw[->] (t1) -- (u1);
\draw[->] (u1) -- (u2);
\node at (3.35, -4.65) {$\dots$};
\draw[->] (ul1) -- (ul);
\draw[->] (ul) -- (wk);
\draw[->] (h) -- (i);
\draw[->] (i) -- (j);
\draw[->] (j) -- (l);
\draw[->] (l) -- (h);
\draw[->] (h1) -- (i1);
\draw[->] (i1) -- (j1);
\draw[->] (j1) -- (l1);
\draw[->] (l1) -- (h1);
\draw[->] (h) to [out=200,in=110,looseness=0.5] (t1);
\draw[->] (ul1) to [out=40,in=220,looseness=0.5] (t1);
\draw[->] (l) to [out=230,in=110,looseness=0.5] (t1);
\draw[->] (j) to [out=250,in=110,looseness=0.5] (t1);
\draw[->] (l1) to [out=135,in=0,looseness=0.5] (t1);
\draw[->] (i1) to [out=90,in=45,looseness=0.5] (t1);
\draw[thick, dashed, rotate=135] (-2.35, -0.15) ellipse (1.9cm and 1cm);
\node at (2.8, -0.5) {$W_i$};

\draw[thick, dashed, rotate=40] (1.8, -6) ellipse (5cm and 2.6cm);
\node at (8.3, -0.5) {$(G_i,o)$};
\end{tikzpicture}
    \caption{After a few recursive steps, the process arrived at $w_i$ such that $d^o_+(w_i)=2$, and the only out-neighbor of $w_i$ in $(G_i,o)$ is $t_1$.}
    \label{fig:6}
\end{figure}

\bigskip


In the prequel, we characterized those graphs in two different ways that admit an extremal orientation $o$ such that there exists a vertex $s$ with $d^o_+(s)=0$. Let's focus now on those graphs $G$ such that $\DOM_t(G)=|V(G)|-1$ but in the extremal orientation $o$ every out-degree is positive. Suppose that $S\subseteq V$ is a total dominating set in $G$ with respect to $o$, such that $|S|=|V(G)|-1$ and denote the only element in $V\setminus S$ with $s$. Consider the directed graph, denoted by $(R,o_R)$, that we get by deleting the outgoing edges of $s$ from $G$ and restricting $o$ to $R$. By Corollary \ref{c:2}, we know that $1\le d^o_+(s) \le 2$.

Observe that $o_R$ is an extremal orientation of $(R,o_R)$, because otherwise there would exists a total dominating set $T$ of size at most $|V(R)|-2=|V(G)|-2$ in $(R,o_R)$ such that $s\notin T$ because $s$ cannot dominate any vertex, thus the same $T$ would have been a total dominating set in $(G,o)$ as well, which is a contradiction.

From the first part of the proof, we can deduce that $R$ must be a graph from $\cF_1\cup\cF_2$. Since we get $R$ from $G$ by deleting the outgoing edges of $s$, we know that all the other graphs that admit an extremal orientation can be constructed from a member of $\cF_1\cup\cF_2$ by adding 1 or 2 outgoing edges of the special vertex $s$, which had 0 out-neighbor originally. The remaining task is to decide what vertices could be the other endpoints of the additional edges.

If there is just 1 edge that was deleted from $G$ then we claim that the other endpoint could be any vertex from $V\setminus\{s\}$. Let us assume that the only additional edge in $G$ leads from $s$ to an arbitrary vertex $v$. Suppose on the contrary that there would be a total dominating set $T$ of size $|V(G)|-2$ in the resulting graph $G$. Then $T$ has to contain $s$ because otherwise $T$ would be a total dominating set of size $|V(G)|-2$ in $(R,o_R)$ which is a contradiction. But $S=V\setminus\{s\}$ is also a total dominating set in $(R,o_R)$ therefore there exists a vertex $u\in S$ such that $u$ dominates $v$ in $(R,o_R)$. This gives rise to a contradiction since then $(T\setminus\{s\})\cup\{u\}$ would definitely be a total dominating set of size $|V(G)|-2$ in $(R,o_R)$. Hence we can obtain new examples by adding just 1 outgoing edge from $s$ to an arbitrary other vertex.

Suppose now that 2 outgoing edges were deleted from $G$. By Corollary \ref{c:3}, for any vertex $v\in V\setminus\{s\}$ such that $d^o_+(v)=2$, the intersection $N^o_+(s)\cap N^o_+(v)$ must be non-empty. 
If the remaining graph $R\in\cF_2$ then there is at least two vertices with out-degree 2 and $s$ is the common out-neighbor of all such vertices. Therefore if $|\{v~|~d^o_+(v)=2,~v\ne s\}|\ge 3$ then $N^o_+(s)$ cannot intersect each out-neighborhood because $s\notin N^o_+(s)$. If $|\{v~|~d^o_+(v)=2,~v\ne s\}|=2$ then let us denote these two vertices with $v_1$ and $v_2$. Then the only possibility is to add two outgoing edges from $s$ to the two different out-neighbors of $v_1$ and $v_2$. Let us note here that the resulting undirected graph $G$ still belongs to the $\cF_2$ family thus we actually did not find a new example this way.

If the remaining graph $R\in\cF_1$, then every vertex with out-degree 2 must dominate $w_k$. Again by Corollary \ref{c:3}, if $|\{v~|~d^o_+(v)=2,~v\ne s\}|\ge 3$ then $w_k\in N^o_+(s)$ is mandatory. But we claim that the other out-neighbor of $s$ could be any vertex in $V\setminus\{s,w_k\}$. Suppose on the contrary that $N^o_+(s)=\{v,w_k\}$ for some vertex $v$, and there exists a total dominating set $T$ with respect to $(G,o)$ such that $|T|=|V(G)|-2$. By the pigeonhole principle there must exists another vertex in $T$ besides $s$ that dominates $w_k$, and every vertex in $V\setminus\{w_k\}$ has exactly one in-neighbor hence we can replace $s$ with the only in-neighbor of $v$ in $(R,o_R)$ and get a total dominating set with respect to $(R,o_R)$ of the same size which is a contradiction. This way we obtained some new undirected graphs $G$ that satisfies $\DOM_t(G)=|V(G)|-1$ but does not admit an extremal orientation with a vertex of 0 out-degree.

If $R\in\cF_1$, but $|\{v~|~d^o_+(v)=2,~v\ne s\}|=1$ then actually $R\in\cF$ (see Figure \ref{fig:2}). Let us assume that $w_i$ is the only vertex besides $s$ such that $d^o_+(w_i)=2$. If $N^o_+(w_i)\cap N^o_+(s)\ne\emptyset$ then the resulting graph $G$ satisfies $\DOM_t(G)=|V(G)|-1$. Indeed, since every vertex $v\in V\setminus \{s,w_i\}$ could dominate exactly one vertex, and $\{s,w_i\}$ could dominate at most 3, thus any total dominating set must have size at least $|V(G)|-1$. Finally if $R\in\cF_1$, but $|\{v~|~d^o_+(v)=2,~v\ne s\}|=2$ then let us denote these two vertices with $v_1$ and $v_2$. In this case either $s$ dominates $w_k$, and $w_k$ is the common out-neighbor of all three of them, or $s$ dominates the other two out-neighbors of $v_1$ and $v_2$. Similarly to the previous cases we can use Corollary \ref{c:3} to prove that these will indeed be extremal orientations of $G$.

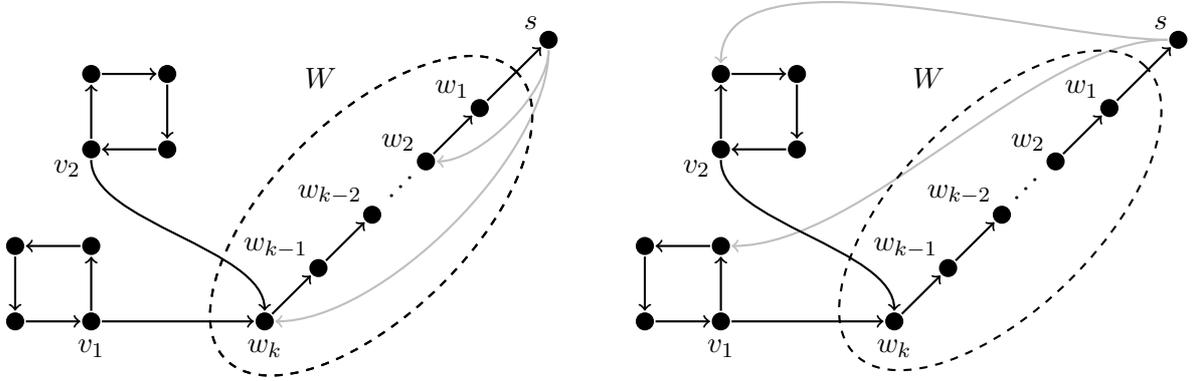
\begin{figure}[H]
    \centering
    \begin{tikzpicture}[thick, main/.style = {draw, circle, fill, minimum size=6pt,inner sep=1pt, outer sep=1pt}]
\node[main] (s) {};
\node[above left] at (s) {$s$};
\node[main] (w) [below left = 1cm of  s] {};
\node[above left] at (w) {$w_1$};
\node[main] (w2) [below left of = w] {};
\node[above left] at (w2) {$w_2$};
\node[main] (x) [below left of = w2] {};
\node[above left] at (x) {$w_{k-2}$};
\node[main] (x2) [below left of = x] {};
\node[above left] at (x2) {$w_{k-1}$};
\node[main] (k) [below left of = x2] {};
\node[below=.1cm] at (k) {$w_k$};
\node[main] (f) [left = 2cm of k] {};
\node[below=.1cm] at (f) {$v_1$};
\node[main] (y) [left of = f] {};
\node[main] (a) [above of = f] {};
\node[main] (z) [left of = a]{};
\node[main] (h) [above = 2cm of a] {};
\node[main] (i) [right of = h] {};
\node[main] (j) [below of = i] {};
\node[main] (l) [left of = j] {};
\node[below left ] at (l) {$v_2$};
\draw[->] (w) -- (s);
\draw[->] (w2) -- (w);
\node[rotate=45] at (-1.92, -1.92) {$\dots$};
\draw[->] (x2) -- (x);
\draw[->] (k) -- (x2);
\draw[->] (y) -- (f);
\draw[->] (f) -- (k);
\draw[->] (z) -- (y);
\draw[->] (a) -- (z);
\draw[->] (f) -- (a);
\draw[->] (h) -- (i);
\draw[->] (i) -- (j);
\draw[->] (j) -- (l);
\draw[->] (l) -- (h);
\draw[->, gray!50] (s) to [out=270,in=0,looseness=0.7] (k);
\draw[->, gray!50] (s) to [out=270,in=0,looseness=0.7] (w2);
\draw[->] (l) to [out=270,in=90,looseness=0.7] (k);
\draw[thick, dashed, rotate=45] (-3.3, 0) ellipse (2.7cm and 1.3cm);
\node at (-3, -0.5) {$W$};

\node[main] (s') [right = 8cm of s] {};
\node[above left] at (s') {$s$};
\node[main] (w') [below left = 1cm of  s'] {};
\node[above left] at (w') {$w_1$};
\node[main] (w2') [below left of = w'] {};
\node[above left] at (w2') {$w_2$};
\node[main] (x') [below left of = w2'] {};
\node[above left] at (x') {$w_{k-2}$};
\node[main] (x2') [below left of = x'] {};
\node[above left] at (x2') {$w_{k-1}$};
\node[main] (k') [below left of = x2'] {};
\node[below=.1cm] at (k') {$w_k$};
\node[main] (f') [left = 2cm of k'] {};
\node[below=.1cm] at (f') {$v_1$};
\node[main] (y') [left of = f'] {};
\node[main] (a') [above of = f'] {};
\node[main] (z') [left of = a']{};
\node[main] (h') [above = 2cm of a'] {};
\node[main] (i') [right of = h'] {};
\node[main] (j') [below of = i'] {};
\node[main] (l') [left of = j'] {};
\node[below left] at (l') {$v_2$};
\draw[->] (w') -- (s');
\draw[->] (w2') -- (w');
\node[rotate=45] at (6.3, -1.95) {$\dots$};
\draw[->] (x2') -- (x');
\draw[->] (k') -- (x2');
\draw[->] (y') -- (f');
\draw[->] (f') -- (k');
\draw[->] (z') -- (y');
\draw[->] (a') -- (z');
\draw[->] (f') -- (a');
\draw[->] (h') -- (i');
\draw[->] (i') -- (j');
\draw[->] (j') -- (l');
\draw[->] (l') -- (h');
\draw[->, gray!50] (s') to [out=180,in=90,looseness=0.7] (h');
\draw[->, gray!50] (s') to [out=180,in=0,looseness=0.7] (a');
\draw[->] (l') to [out=270,in=90,looseness=0.7] (k');
\draw[thick, dashed, rotate=45] (2.6, -5.8) ellipse (2.7cm and 1.3cm);

\draw[thick, dashed, rotate=45] (-3.3, 0) ellipse (2.7cm and 1.3cm);
\node at (5, -0.5) {$W$};

\end{tikzpicture}
    \caption{The same graph $R\in\cF_1$ for which $|\{v~|~d(v)=3,v\ne w_k\}|=2$ can be extended with two edges from $s$ in totally different ways such that both of the resulting graphs belong to $\cF_3$ and admit extremal orientations that coincide restricted to $R$.}
    \label{fig:7}
\end{figure}

We point out that those graphs $G$ that admit an extremal orientation but does not belong to $\cF_1\cup\cF_2$ can be constructed from a graph $R\in\cF_1$ by adding 1 or 2 further edges to the only vertex of degree 1 in $R$. Observe that the possibilities that were explained in the previous paragraphs mean that $G\in\cF_3$. 

To conclude the proof, we refer to Proposition \ref{p:concomp} and deduce that if $G$ is not connected but $\DOM_t(G)=|V(G)|-1$ then there must exists exactly one connected component $G'$ that also satisfies $\DOM_t(G')=|V(G')|-1$ and all other connected components must be cycles by Result \ref{r:equality}. But then $G'$ must belong to $\cF_1\cup\cF_2\cup\cF_3$ which finishes the proof. 
\end{proof}

\section{Conclusion, open problems} 

As it can be seen from the proof of our main result, every graph that satisfies $\dom_t(G)=|V(G)|-1$ also satisfies $\DOM_t(G)=|V(G)|-1$. Note that this is also a consequence of Result \ref{r:equality}, and the fact that $\dom_t(G)\le\DOM_t(G)$ by definition. But there are significantly more graphs that satisfy $\DOM_t(G)=|V(G)|-1>\dom_t(G)$. For these graphs, how small can $\dom_t(G)$ be?

It turned out that $\dom_t(G)$ can be as small as 3 for some graphs $G$ such that $\DOM_t(G)=|V(G)|-1$. Observe that $\dom_t(G)\ge 3$ always holds because $G$ is a simple graph. Indeed, one vertex cannot dominate itself (no loops), and two vertices cannot dominate each other (no parallel edges). Hence these examples are best possible for maximizing both $\frac{\DOM_t(G)}{\dom_t(G)}$ and $\DOM_t(G)-\dom_t(G)$. In Figure \ref{fig:8} and \ref{fig:9}, we show two such examples.

\begin{figure}[H]
    \centering
    \begin{tikzpicture}[thick, main/.style = {draw, circle, fill, minimum size=6pt,inner sep=1pt, outer sep=1pt}]
\node[main] (s) {};
\node[below=.2cm] at (s) {$s$};
\node[main, fill=gray!30, minimum size=0.35cm] (a) [above = 2cm of s] {}; 
\node[main, fill=gray!30, minimum size=0.35cm] (b) [above right = 1cm of a] {}; 
\node[main, fill=gray!30, minimum size=0.35cm] (c) [above left = 1cm of a] {}; 
\node[main, fill=gray!30, minimum size=0.35cm] (d) [right = 2cm of s]{}; 
\node[main, fill=gray!30, minimum size=0.35cm] (e) [below = 1 cm of d]{}; 
\node[main, fill=gray!30, minimum size=0.35cm] (g) [right = 1 cm of d] {}; 
\node[main, fill=gray!30, minimum size=0.35cm] (f) [below = 1cm of g] {};
\node[main, fill=gray!30, minimum size=0.35cm] (h) [left = 3cm of s] {};
\node[main, fill=gray!30, minimum size=0.35cm] (i) [below = 1cm of h] {};
\node[main, fill=gray!30, minimum size=0.35cm] (j) [right = 1cm of i] {};
\node[main, fill=gray!30, minimum size=0.35cm] (k) [above = 1cm of j] {};
\draw[->] (a) -- (b);
\draw[->] (b) -- (c);
\draw[->] (c) -- (a);
\draw[->] (d) -- (e);
\draw[->] (e) -- (f);
\draw[->] (f) -- (g);
\draw[->] (g) -- (d);
\draw[->] (h) -- (i);
\draw[->] (i) -- (j);
\draw[->] (j) -- (k);
\draw[->] (k) -- (h);

\draw[->] (a) -- (s);
\draw[->] (b) -- (s);
\draw[->] (c) -- (s);
\draw[->] (d) -- (s);
\draw[->] (e) -- (s);
\draw[->] (f) to [out=250,in=300,looseness=1] (s);
\draw[->] (g) to [out=120,in=40,looseness=1] (s);
\draw[->] (h) to [out=70,in=120,looseness=1] (s);
\draw[->] (i) to [out=290,in=240,looseness=1] (s);
\draw[->] (j) -- (s);
\draw[->] (k) -- (s);

\node[main, fill=gray!30, minimum size=0.35cm] (s') [right = 8cm of s] {}; 
\node[below=.2cm] at (s') {$s$};
\node[main] (a') [above = 2cm of s'] {}; 
\node[main] (b') [above right = 1cm of a'] {}; 
\node[main] (c') [above left = 1cm of a'] {}; 
\node[main] (d') [right = 2cm of s']{}; 
\node[main] (e') [below = 1 cm of d']{}; 
\node[main] (g') [right = 1 cm of d'] {}; 
\node[main] (f') [below = 1cm of g'] {};
\node[main] (h') [left = 3cm of s'] {};
\node[main] (i') [below = 1cm of h'] {};
\node[main, fill=gray!30, minimum size=0.35cm] (j') [right = 1cm of i'] {};
\node[main, fill=gray!30, minimum size=0.35cm] (k') [above = 1cm of j'] {};
\draw[->] (a') -- (b');
\draw[->] (b') -- (c');
\draw[->] (c') -- (a');
\draw[->] (d') -- (e');
\draw[->] (e') -- (f');
\draw[->] (f') -- (g');
\draw[->] (g') -- (d');
\draw[->] (h') -- (i');
\draw[->] (i') -- (j');
\draw[->] (j') -- (k');
\draw[->] (k') -- (h');

\draw[<-] (a') -- (s');
\draw[<-] (b') -- (s');
\draw[<-] (c') -- (s');
\draw[<-] (d') -- (s');
\draw[<-] (e') -- (s');
\draw[<-] (f') to [out=250,in=300,looseness=1] (s');
\draw[<-] (g') to [out=120,in=40,looseness=1] (s');
\draw[<-] (i') to [out=290,in=240,looseness=1] (s');
\draw[<-] (j') -- (s');
\draw[->] (k') -- (s');
\draw[<-] (h') to [out=70,in=130,looseness=1] (s');
\draw[thick, dotted] (7.3, -0.6) ellipse (1.6cm and 1.6cm);

\end{tikzpicture}   
    \caption{An example for a graph $G\in\cF_2$ with $3=\dom_t(G) < \DOM_t(G)=|V(G)|-1$. The gray nodes mark the minimal total dominating sets in both orientations of $G$.}
    \label{fig:8}
\end{figure}
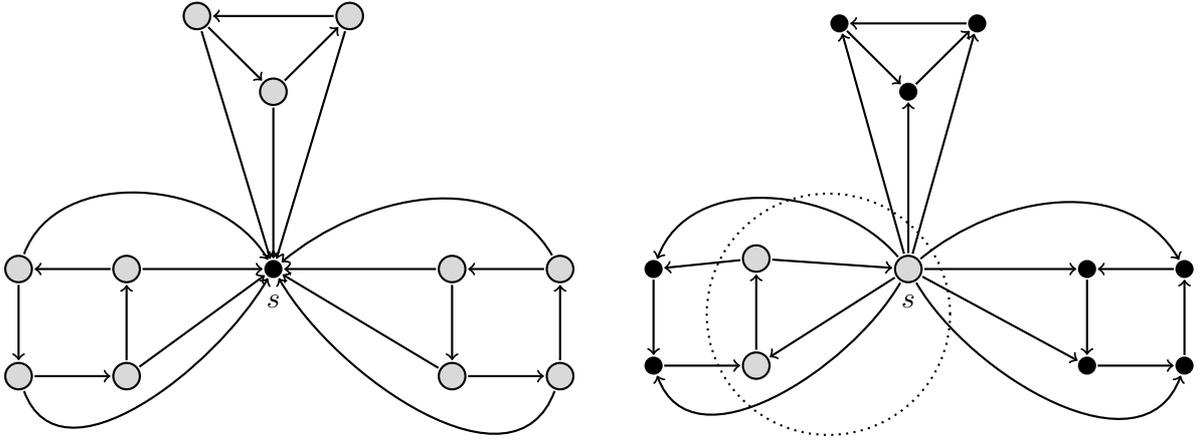

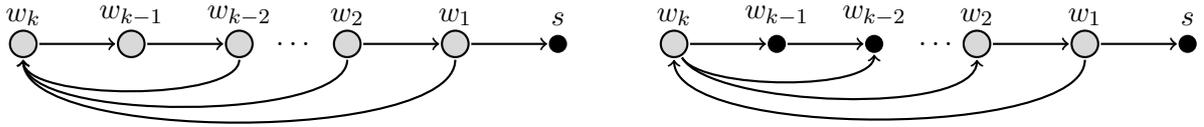
\begin{figure}[H]
    \centering
    \begin{tikzpicture}[thick, main/.style = {draw, circle, fill, minimum size=6pt,inner sep=1pt, outer sep=1pt}]
\node[main] (s) {};
\node[above = 0.1 cm] at (s) {$s$};
\node[main, fill=gray!30, minimum size=0.35cm] (w) [left = 1cm of s] {};
\node[above = 0.1 cm] at (w) {$w_1$};
\node[main, fill=gray!30, minimum size=0.35cm] (x) [left = 1cm of w] {};
\node[above = 0.1 cm] at (x) {$w_2$};
\node[main, fill=gray!30, minimum size=0.35cm] (m) [left = 1cm of x] {};
\node[above = 0.1 cm] at (m) {$w_{k-2}$};
\node[main, fill=gray!30, minimum size=0.35cm] (k) [left = 1cm of m] {};
\node[above = 0.1 cm] at (k) {$w_{k-1}$};
\node[main, fill=gray!30, minimum size=0.35cm] (y) [left = 1cm of k] {};
\node[above = 0.1 cm] at (y) {$w_k$};
\node at (-3.45, 0) {$\dots$};
\draw[->] (w) -- (s);
\draw[->] (x) -- (w);
\draw[->] (k) -- (m);
\draw[->] (y) -- (k);
\draw[->] (w) to [out=270,in=270,looseness=0.5] (y);
\draw[->] (x) to [out=270,in=270,looseness=0.5] (y);
\draw[->] (m) to [out=270,in=270,looseness=0.5] (y);

\node[main] (s') [right = 8cm of s] {};
\node[above = 0.1 cm] at (s') {$s$};
\node[main, fill=gray!30, minimum size=0.35cm] (w') [left = 1cm of s'] {};
\node[above = 0.1 cm] at (w') {$w_1$};
\node[main, fill=gray!30, minimum size=0.35cm] (x') [left = 1cm of w'] {};
\node[above = 0.1 cm] at (x') {$w_2$};
\node[main] (m') [left = 1cm of x'] {};
\node[above = 0.1 cm] at (m') {$w_{k-2}$};
\node[main] (k') [left = 1cm of m'] {};
\node[above = 0.1 cm] at (k') {$w_{k-1}$};
\node[main, fill=gray!30, minimum size=0.35cm] (y') [left = 1cm of k'] {};
\node[above = 0.1 cm] at (y') {$w_k$};
\node at (5, 0) {$\dots$};
\draw[->] (w') -- (s');
\draw[->] (x') -- (w');
\draw[->] (k') -- (m');
\draw[->] (y') -- (k');
\draw[->] (w') to [out=270,in=270,looseness=0.5] (y');
\draw[<-] (x') to [out=270,in=300,looseness=0.5] (y');
\draw[<-] (m') to [out=270,in=300,looseness=0.5] (y');

\end{tikzpicture}
    \caption{An example for a graph $G\in\cF_1$ with $3=\dom_t(G) < \DOM_t(G)=|V(G)|-1$. The gray nodes mark the minimal total dominating sets in both orientations of $G$.}
    \label{fig:9}
\end{figure}

In these examples the two orientations drastically differ on the edges incident with $w_k$. The larger the out-degree of $w_k$, the smaller $\dom_t(G)$ can get. Therefore there are examples similar to the one in Figure \ref{fig:9} such that $\dom_t(G)=m$ for any $3\le m \le |V(G)|-1$.

What can we say on the number of edges in graphs that admit extremal orientations? From Result \ref{r:equality}, we know that if $\DOM_t(G)=|V(G)|$ then $G$ must be a disjoint union of cycles hence $|E(G)|=|V(G)|$ for these graphs. One would expect that if the number of edges of $G$ is big enough then $\DOM_t(G)$ needs to be significantly smaller than the number of vertices.

Let us remark that if $G\in\cF_1\cup\cF_2\cup\cF_3$ then $|E(G)|\le 2|V(G)|-2$ must hold but equality can be reached in multiple ways. The easiest to see that if $G\in\cF_2$ and $s$ is adjacent to every other vertex then $|E(G)|=2|V(G)|-2$. We would like to finish by posing the following general problem.

\begin{problem}\label{p:edge}
    Can we establish some connection between $|E(G)|$ and $\DOM_t(G)$? For example, if $|E(G)|=O(n^2)$ then what could be an upper bound on $\DOM_t(G)$? Or in the other direction, can we find a graph $G$ such that $\DOM_t(G)$ is quite large although $|E(G)|$ is quite big?
\end{problem}

\section*{Acknowledgements}

This research was started at the beginning of 2024, and during that time the first author was supported by the ÚNKP-23-4-SZTE-628 New National Excellence Program, and the second author was supported by the ÚNKP-23-6-I-ELTE-1256 New National Excellence Program. Currently, the first author is supported by the EKÖP-24-4-SZTE-609 Program. All of these programs belong to the Ministry for Culture and Innovation from the source of the National Research, Development and Innovation Fund.

~\hfill \protect\includegraphics[height=0.8cm]{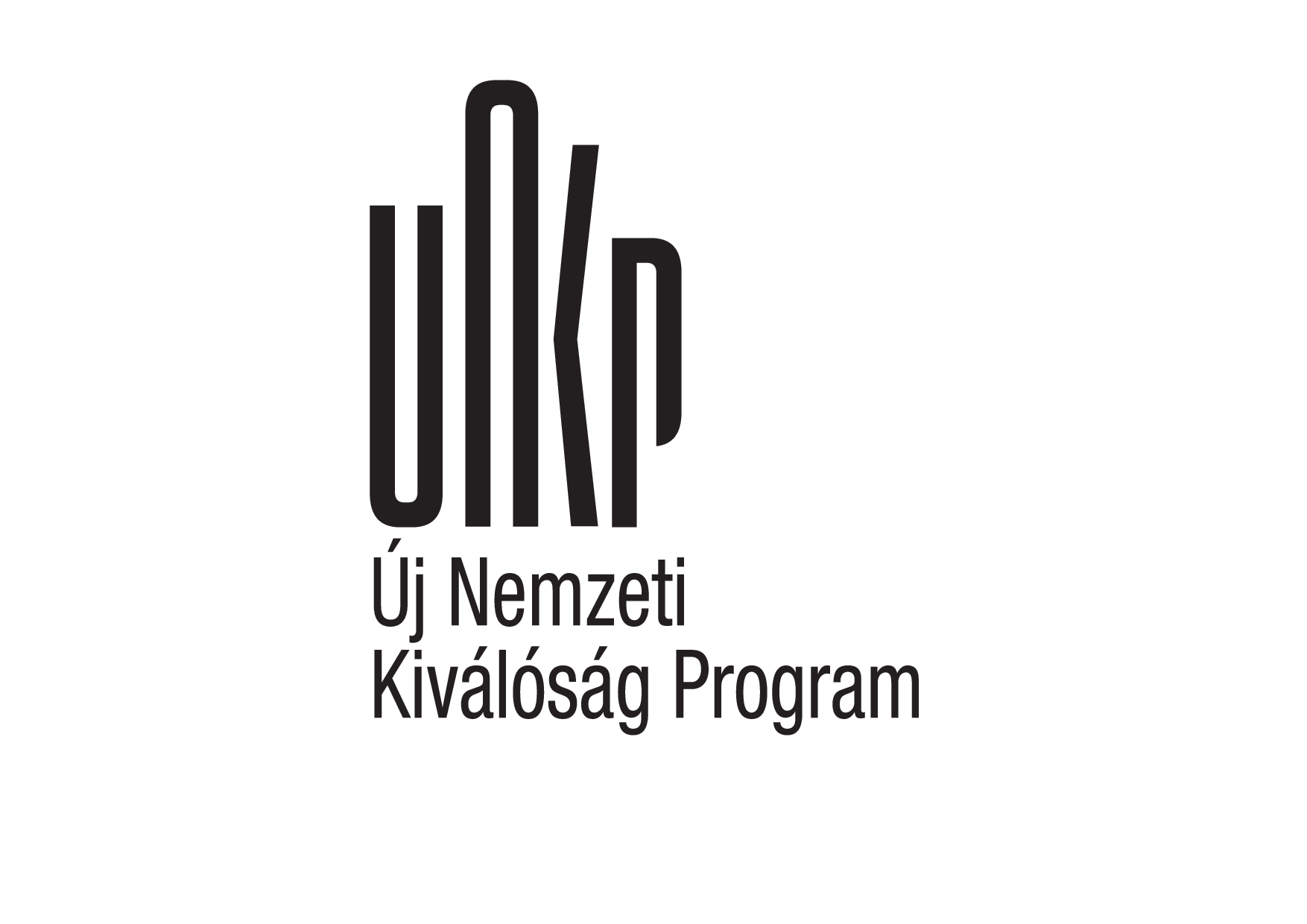}\includegraphics[height=0.8cm]{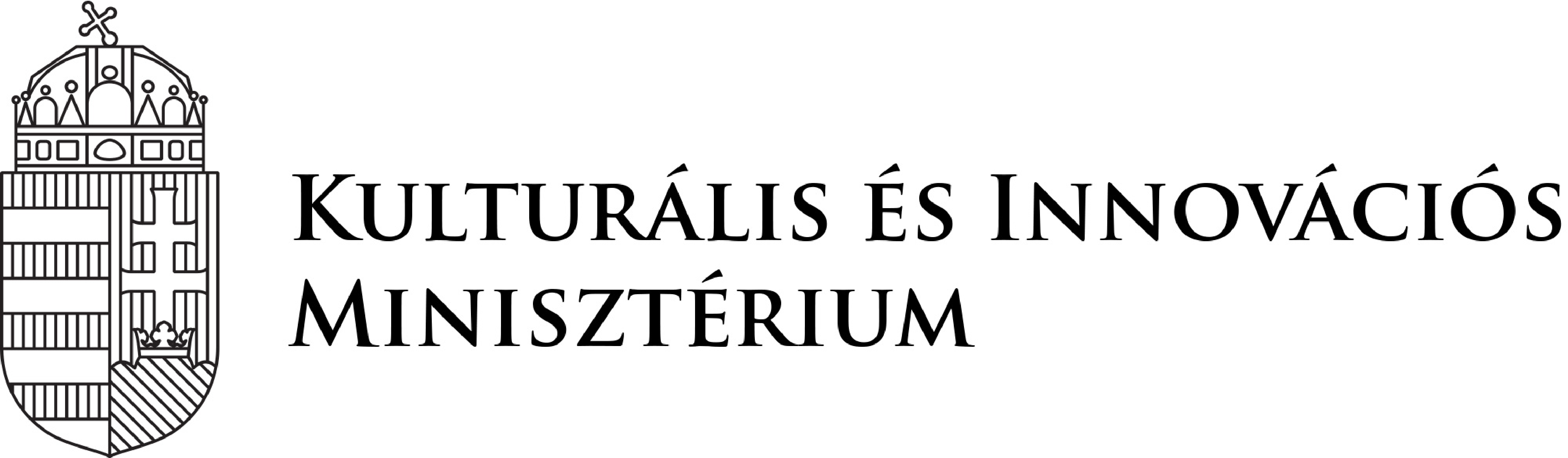} \includegraphics[height=0.8cm]{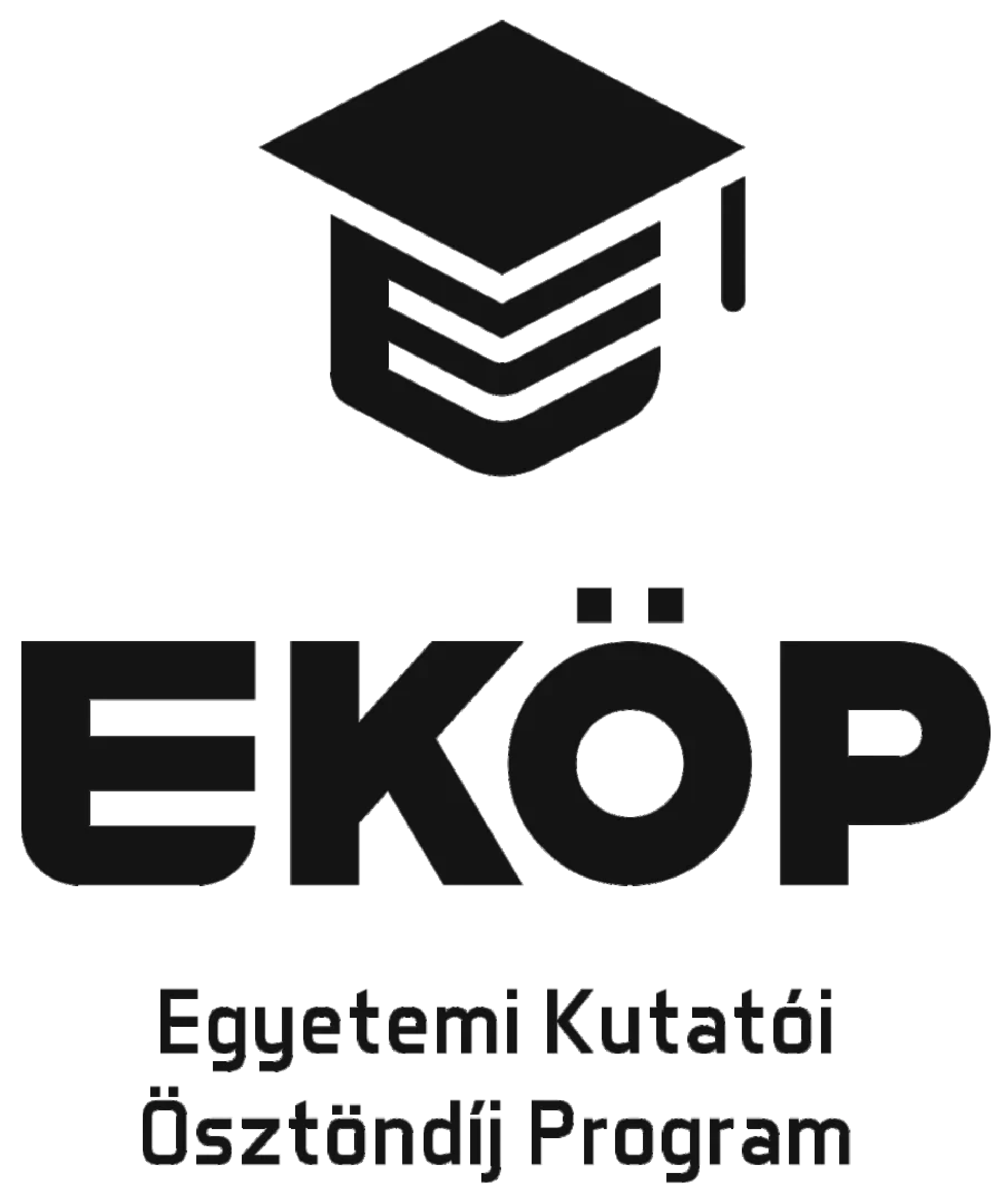}


\begin{thebibliography}{00}
\bibitem{ADJK} \textsc{S.E. Anderson, T. Dravec, D. Johnston, K. Kuenzel:~} Orientable total domination in graphs. ({\it arXiv preprint}: \url{https://arxiv.org/abs/2311.16307})

\bibitem{HHH1} \textsc{T.W. Haynes, S.T. Hedetniemi, M.A. Henning (eds):~} Topics in Domination in Graphs. {Series: Developments in Mathematics}, {\bf Vol. 64}, Springer, Cham, (2020), viii + 545 pp.

\bibitem{HHH2} \textsc{T.W. Haynes, S.T. Hedetniemi, M.A. Henning (eds):~} Structures of Domination in Graphs. {Series: Developments in Mathematics}, {\bf Vol. 66}, Springer, Cham, (2021), viii + 536 pp.

\bibitem{HHH3} \textsc{T.W. Haynes, S.T. Hedetniemi, M.A. Henning:~} Domination in Graphs: Core Concepts. {Series: Springer Monographs in Mathematics}, Springer, Cham, (2023), xx + 644 pp.

\bibitem{HY} \textsc{M.A. Henning, A. Yeo:~}
Total Domination in Graphs. {\it Series: Springer Monographs in Mathematics}, Springer, Cham, (2013), xiv + 178 pp.

\bibitem{ABKKR} \textsc{S.E. Anderson, B. Bre\v{s}ar, S. Klav\v{z}ar, K. Kuenzel, D.F. Rall:~} Orientable domination in product-like graphs. ({\it Discrete Appl. Math.}), {\bf 326}, (2023), 62--69.

\bibitem{CH} \textsc{Y. Caro, M.A. Henning:~} Directed domination in oriented graphs. ({\it Discrete Appl. Math.}), {\bf 160}, (2012), 1053--1063.

\bibitem{HLY1} \textsc{L. Hansen, Y-L. Lai, and B.Q. Yue:~} Orientable open domination of graphs. ({\it J. Combin. Math. Combin. Comput.}), {\bf 29}, (1999), 159--181.

\bibitem{HLY2} \textsc{L. Holley, Y-L. Lai, and B.Q. Yue:~} Orientable step domination in graphs. ({\it Congressus Numerantium}), (1996), 129--144.

\end{thebibliography}
\end{document}